\newtheorem{thm}{Theorem}[section]
\theoremstyle{plain}
\newtheorem{corollary}[thm]{Corollary}
\newtheorem{lemma}[thm]{Lemma}
\newtheorem{theorem}[thm]{Theorem}
\newtheorem*{notation*}{Notation}
\newtheorem{remark}[thm]{Remark}
\newcommand\be{\begin{equation}}
\newcommand\ee{\end{equation}}
\newcommand\bea{\begin{eqnarray}}
\newcommand\eea{\end{eqnarray}}
\newcommand\bi{\begin{itemize}}
	\newcommand\ei{\end{itemize}}
\newcommand\ben{\begin{enumerate}[(a)]}
	\newcommand\een{\end{enumerate}}
\newcommand\bc{\begin{center}}
	\newcommand\ec{\end{center}}
\def\ba#1\ea{\begin{align*}#1\end{align*}}
\begin{document}
\setlength{\abovedisplayskip}{1pt}
\setlength{\belowdisplayskip}{1pt}
\setlength{\abovedisplayshortskip}{1pt}
\setlength{\belowdisplayshortskip}{1pt}
\title{An Explicit Upper Bound for $|\zeta(1+it)|$}
	\author{Dhir Patel}
	\maketitle\vspace{-5ex}
\begin{abstract}
   In this paper we provide an explicit bound for $|\zeta(1+it)|$ in the form of $|\zeta(1+it)|\leq \min\left(\log t, \frac{1}{2}\log t+1.93, \frac{1}{5}\log t+44.02 \right)$. This improves on the current best-known explicit bound of $|\zeta(1+it)|\leq 62.6(\log t)^{2/3}$ up until $t$ of the magnitude $10^{10^7}$.
\end{abstract}

\section{introduction}
The study of the growth rate of $\zeta(1+it)$ has been of great interest because of its application in estimating $S(T)$ as shown in \cite{trudgianst} and computing zero free regions for the Riemann zeta function. 

In 1900 Mellin \cite{mellin} was the first to obtain a result in this direction and showed that for real $t$ bounded away from $0$ that
\begin{equation}
    \zeta(1+it)=\mathcal{O} (\log |t|) \label{mellin}
\end{equation}
In 1921, Weyl improved (\ref{mellin}) using Weyl's sums in \cite{weyl} to 
\begin{equation}
    \zeta(1+it)=\mathcal{O}\left(\frac{\log t}{\log \log t}\right)
\end{equation}
and was in turn improved upon by Vinogradov which can be found  in \textup{\cite[Theorem 6.14]{titchmarsh}} to
\begin{equation}
    \zeta(1+it)=\mathcal{O}(\log ^{3/4}t \log^{3/4} \log t) \label{vino}
\end{equation}

Several authours namely Flett \cite{flett}, Walfisz \cite{walfisz}, and Korobv \cite{korobov} between 1950-58 obtained bounds of the form $\mathcal{O}(\log^{3/4} t \log \log^{1/2+\epsilon} t),\mathcal{O}(\log^{3/4} t \log \log^{1/2} t), \mathcal{O}(\log^{5/7+\epsilon})$ respectively.
Moreover, authors such as Vinogradov \cite{vinogradov}, Korobov \cite{korobov1} \cite{korobov2} in 1958 and Richert \cite{richert} in 1967 gave the best known unconditional estimate 
\begin{equation}
    \zeta(1+it) =\mathcal{O}(\log^{2/3}t) \label{richert}
\end{equation}

There are several conditional bounds known for $\zeta(1+it)$. One such was given by Littlewood in 1912, assuming the Lindel\"{o}f hypothesis and showed that
\begin{equation}
    \zeta(1+it)=\mathcal{O}(\log \log t \log\log\log t)
\end{equation}
He further improved upon this result in 1928 \cite{littlewood} and provided the best known conditional bound assuming the Riemann Hypothesis that states
\begin{equation}
    \zeta(1+it)=\mathcal{O}(\log\log t).
\end{equation}

In addition to the asymptotic behaviour of $\zeta(1+it)$, many explicit bounds of the form 
\begin{equation}
    |\zeta(1+it)|\leq a\log t, \hspace{2mm} \text{for}\hspace{2mm} t\geq t_0 \label{landaugen}
\end{equation}
have also been derived for it. One of the earliest known results is given by
Landau \cite{landau} in 1903 where he shows $a=2$, $t_0=10$. Backlund \cite{backlund} in 1918 improved this result to $a=1$ and $t_0>50$ and this $t_0$ was lowered by Trudgian in \cite{trudgian} to $2.001\ldots$ and in the same paper he showed\footnote[1]{This explicit estimate with $a = 3/4$ is obtained as an application of an explicit van der Corput test using the second derivative derived by Cheng-Graham in \textup{\cite[Lemma 3]{graham}}. However, that Cheng-Graham result is now known to be incorrect. This seems to be an irrecoverable error and the $|\zeta(1+it)|$ estimate in \cite{trudgian} no longer holds. This is elaborated further in section 2.} that $a=\frac{3}{4}$ and $t_0=3$.

The best known explicit bounds for large $t$ are of the form
\begin{equation}
    |\zeta(1+it)|\leq A\log^{2/3}t, \hspace{2mm} \text{for}\hspace{2mm} t\geq t_0 \label{expbest}
\end{equation}
In 1967, Richert first obtained $(\ref{expbest})$ in \cite{richert} for unknown constant $A$ which was computed in 1985 by Ellison\cite{ellison} to be $2100$ with $t_0=3$ and in 1995 Cheng \cite{cheng} improved it to $175$ with $t_0=2$. More recent improvements have been given by Ford in  \cite{ford} where he showed $A=72.6$ which Trudgian improved to $A=62.6$ with $t_0=3$ in \cite{trudgian}.

Moreover, bounds such as $(\ref{expbest})$ seem to improve over bounds like $(\ref{landaugen})$ only when $t$ is astronomically large because of extremely large $A$ value. Hence, it is worth obtaining good explict bounds of the form $(\ref{landaugen})$ for computational purposes when $t$ is relatively small. Keeping this in mind we have the following theorem 
\begin{theorem}\label{maintheorem}
If $t\geq 3$, then 
\begin{equation}
    \lvert \zeta(1+it)\rvert\leq \min\left(\log t, \frac{1}{2}\log t+1.93, \frac{1}{5}\log t+44.02 \right) \label{actualzeta1const}
\end{equation}
In particular for $t\geq 8.261\ldots \times 10^{60}$
\begin{equation}
    \lvert \zeta(1+it)\rvert\leq \frac{1}{5}\log t+ 44.02 \label{eventualconst}
\end{equation}
\end{theorem}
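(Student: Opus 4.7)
The plan is to start from the Euler-Maclaurin representation
\[
\zeta(1+it)=\sum_{n=1}^{N}\frac{1}{n^{1+it}}+\frac{N^{-it}}{it}-\frac{1}{2}N^{-1-it}-(1+it)\int_{N}^{\infty}\frac{\{u\}-\tfrac{1}{2}}{u^{2+it}}\,du,
\]
valid for any positive integer $N$, and to prove each of the three bounds separately with its own choice of $N$ and its own estimate for the main sum; the theorem then follows by taking the minimum. The bound $|\zeta(1+it)|\le\log t$ follows by the route of Backlund and Landau: take $N$ of order $t$, bound the main sum trivially by $\sum_{n\le N}n^{-1}\le\log N+\gamma+1/(2N)$, and verify that the remaining Euler-Maclaurin terms are negligible for $t\ge 3$.

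For the bound $|\zeta(1+it)|\le\tfrac{1}{2}\log t+1.93$, I would truncate at $N_{2}$ of order $\sqrt{t}$, split the main sum as $\sum_{n\le M}+\sum_{M<n\le N_{2}}$ with $M$ of order $\sqrt{t}$ so that $\sum_{n\le M}n^{-1}\le\tfrac{1}{2}\log t+O(1)$, and estimate the tail by an explicit Kusmin-Landau first derivative test applied to the exponential sum $\sum n^{-it}$ after partial summation. The phase derivative $-t/(2\pi n)$ is monotonic on $M<n\le N_{2}$, and, after excising a handful of short stretches where $\|t/(2\pi n)\|$ is small (which one bounds trivially), is bounded away from integers, so the tail is absolutely $O(1)$. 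Combined with the Euler-Maclaurin error, this produces the additive $1.93$ after numerical optimisation of $M$ and $N_{2}$.

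For the bound $|\zeta(1+it)|\le\tfrac{1}{5}\log t+44.02$, I would truncate at $N_{3}$ of order $t$, split at $M\asymp t^{1/5}$ so that $\sum_{n\le M}n^{-1}\le\tfrac{1}{5}\log t+O(1)$, and dissect $(M,N_{3}]$ into dyadic blocks $(2^{j}M,2^{j+1}M]$. Each block is estimated via an explicit van der Corput-style bound whose corresponding exponent pair $(\kappa,\lambda)$ satisfies $\kappa/(1+\kappa-\lambda)=1/5$; such a pair arises by iterating the A- and B-processes starting from the trivial pair $(0,1)$. Geometric summation over $j$ keeps the total tail $O(1)$, and the remaining absolute constants, together with the Euler-Maclaurin remainder, are absorbed into the additive $44.02$.

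The main obstacle is the explicit bookkeeping of constants, especially in the third bound. The Kusmin-Landau and higher-order van der Corput estimates in the literature are typically stated asymptotically or with inconvenient constants, and the Cheng-Graham explicit second derivative bound used in earlier work is, as noted in footnote~1, no longer valid. One will therefore have to rework the relevant explicit derivative estimates with corrected constants and then carefully optimise the cut-off $M$, the truncation $N_{3}$, and the geometric sums in $j$ in order to bring the additive term down to the stated $44.02$.
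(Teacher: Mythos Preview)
Your overall architecture is sensible, and for the third bound you have correctly identified the exponent pair $A^{3}B(0,1)=(1/30,13/15)$ (equivalently, the explicit fifth derivative test), which is exactly what the paper uses. But there are two substantive differences from the paper's proof, one of which is a genuine gap in what you wrote.

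\textbf{Representation.} The paper does not use Euler--Maclaurin at all. It uses the explicit Riemann--Siegel formula of Arias de Reyna (Theorem~\ref{zeta1exp}), which already truncates the main sum at $n_{1}=\lfloor\sqrt{t/2\pi}\rfloor$ and supplies a second sum $\frac{g(t)}{\sqrt{t}}\sum_{n\le n_{1}}n^{it}$ with explicit remainder $\mathcal{R}$. Because the second sum carries a prefactor $\sim t^{-1/2}$ and has length $\sim t^{1/2}$, the trivial bound on it is $O(1)$. Consequently the bound $\tfrac{1}{2}\log t+1.93$ falls out of the Riemann--Siegel formula by nothing more than the triangle inequality and the harmonic-sum estimate; no Kusmin--Landau step is needed.

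\textbf{The gap.} In your scheme for $\tfrac{1}{2}\log t+1.93$ you truncate Euler--Maclaurin at $N_{2}\asymp\sqrt{t}$. With the first-order formula you wrote down, the remainder term satisfies
\[
\Bigl|(1+it)\int_{N_{2}}^{\infty}\frac{\{u\}-\tfrac12}{u^{2+it}}\,du\Bigr|\ \le\ \frac{|1+it|}{2N_{2}}\ \asymp\ \sqrt{t},
\]
which swamps the target additive constant $1.93$. Taking more Euler--Maclaurin terms does not help (each integration by parts multiplies by a further factor of $t$). To make Euler--Maclaurin work you must take $N_{2}\asymp t$, and then the ``tail'' $\sum_{\sqrt{t}<n\le t}n^{-1-it}$ has to be shown to be $O(1)$. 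That requires the (corrected) second derivative test after dyadic subdivision, not merely the first derivative test as you describe; and keeping the resulting constant below $1.93$ is delicate precisely because the Cheng--Graham second derivative bound has to be repaired. The paper sidesteps all of this by using Riemann--Siegel.

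\textbf{The $\tfrac{1}{5}\log t$ bound.} Here the two approaches are closer. The paper splits the first Riemann--Siegel sum at $jt^{1/5}$ (with $j=60$), bounds the initial piece by $(1/5)\log t+\gamma+\log j$, and bounds the range $jt^{1/5}\le n\le \sqrt{t/2\pi}$ by dissecting into blocks of ratio $1+\epsilon$ and applying an explicit fifth derivative test (Lemma~\ref{fifthderivative}), which the paper builds up through explicit third and fourth derivative tests. The second Riemann--Siegel sum is again bounded trivially. Your plan is the same in spirit, but your dyadic range runs all the way to $N_{3}\asymp t$ rather than $\sqrt{t/2\pi}$; this still gives a convergent geometric series (since the per-block contribution is $\asymp t^{1/30}N^{-1/6}$), but with roughly twice as many blocks, so the constants you would obtain are a priori worse than the paper's. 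The bulk of the paper's effort is in deriving the explicit constants $\alpha_{5},\tau_{5},\gamma_{5},\omega_{5},\beta_{5}$ of the fifth derivative test and optimising the free parameters $\epsilon,j,\eta_{3},\eta_{4},\eta_{5}$ numerically; your proposal correctly anticipates this as the main obstacle but does not indicate how you would produce those constants.
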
 

\section{Remark on Erroneous computational lemma in literature}
An important tool used to obtain bounds such as $(\ref{landaugen})$ with $a=\frac{3}{4}$ is an explicit version of van der Corput's second derivative test. This result can be found in the work of Cheng-Graham in \textup{\cite[Lemma 3]{graham}}. However a computational flaw was discovered by Kevin Ford \cite{xyz} and Reyna in \cite{reynacorrection} with this Cheng-Graham lemma. This affects many explicit estimates in the literature such as \cite{ford}, \cite{trudgian}, \cite{hiary} to name a few. However, we note that even if $(\ref{landaugen})$ with $a=\frac{3}{4}$ were true, Theorem $\ref{maintheorem}$ gives an improvement on it for $t\geq 2.17\times 10^3$.

For the remainder of the section we record the errors in the work of Cheng-Graham \textup{\cite[Lemma 2 and 3]{graham}}. We also correct another result in literature \textup{\cite[Lemma 1.2]{hiary}} affected due to these errors. 
We begin by providing corrected version of flawed Cheng-Graham lemma in \textup{\cite[Lemma 2]{graham}}. To do so, we first define 
$\|x\| \coloneqq \min_{n\in \mathbb{Z}} |x-n|$ and we observe that $0\leq \|x\|\leq 1/2.$

\begin{lemma}\label{incorrectevery}
Suppose $f$ is a continuously differentiable real-valued function with a monotonic derivative and $\|f'\|\geq U^{-1}$ for some positive real number $U$ on the interval $(a, b]$. Then
\begin{equation}
    |S|=\left\lvert\sum_{n\in(a, b]}e^{2\pi i f(n)}\right\rvert\leq \frac{2}{\pi}U. \label{correct}
\end{equation}
\end{lemma}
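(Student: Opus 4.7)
The plan is to prove this as a Kuzmin-Landau type estimate via summation by parts, and then extract the explicit constant $2/\pi$ from the elementary inequality $\cot x \leq 1/x$ on $(0, \pi/2]$. First I would dispose of the trivial case: if $(a, b]$ contains no integer then $S = 0$ and the bound is automatic, so assume otherwise and let $N_1+1, N_1+2, \ldots, N_2$ be the integers in $(a,b]$. The hypothesis $\|f'\| \geq 1/U > 0$ together with continuity and monotonicity of $f'$ forces $f'$ to miss every integer, so by the intermediate value theorem its range lies in a single interval $(k + 1/U,\, k+1 - 1/U)$ for some $k \in \Z$. Replacing $f(x)$ by $f(x) - kx$ does not change $e^{2\pi i f(n)}$ at integer $n$, so I may assume $f'(x) \in (\lambda, 1-\lambda)$ throughout, where $\lambda := 1/U$; and replacing $f$ by $-f$ if necessary (which conjugates $S$ and preserves $|S|$) I may further assume $f'$ is increasing.

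Set $u_n := e^{2\pi i f(n)}$ and $\phi_n := f(n+1)-f(n) = \int_n^{n+1} f'(x)\,dx$, so that $\phi_n$ is nondecreasing in $n$ and lies in $(\lambda, 1-\lambda)$. The key algebraic identity is
\[
u_n = D_n (u_{n+1} - u_n), \qquad D_n := \frac{1}{e^{2\pi i\phi_n}-1} = -\tfrac{1}{2} - \tfrac{i}{2}\cot(\pi\phi_n),
\]
so $|D_n| = 1/(2\sin(\pi\phi_n))$ and the points $D_n$ lie on the vertical line $\mathrm{Re}\,z = -1/2$ with monotonic imaginary coordinate (because $\cot(\pi\cdot)$ is strictly decreasing on $(0,1)$, and $\phi_n$ stays in this single monotonicity interval). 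Summation by parts then yields
\[
S = D_{N_2} u_{N_2+1} - D_{N_1+1} u_{N_1+1} - \sum_{n=N_1+2}^{N_2}(D_n - D_{n-1})\,u_n,
\]
and the differences $D_n - D_{n-1}$ all point in a common imaginary direction, so $\sum_n |D_n - D_{n-1}|$ collapses by telescoping to $|D_{N_2} - D_{N_1+1}|$.

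Combining with $|u_n| = 1$ and simplifying by the half-angle identities $(1-\cos\theta)/\sin\theta = \tan(\theta/2)$ and $(1+\cos\theta)/\sin\theta = \cot(\theta/2)$, the right-hand side reduces to
\[
|S| \leq \tfrac{1}{2}\tan(\pi\phi_{N_2}/2) + \tfrac{1}{2}\cot(\pi\phi_{N_1+1}/2).
\]
Since $\phi_n \in (\lambda, 1-\lambda)$ and $\tan(\pi(1-\lambda)/2) = \cot(\pi\lambda/2)$, each summand is at most $\tfrac{1}{2}\cot(\pi\lambda/2)$, whence $|S| \leq \cot(\pi\lambda/2) \leq 2/(\pi\lambda) = 2U/\pi$, the final step using $\tan x \geq x$ on $[0,\pi/2)$. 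The main obstacle I anticipate is the normalization: one must combine $\|f'\| \geq 1/U$ with the continuity and monotonicity of $f'$ to confine its range to a single fundamental interval $(k+\lambda, k+1-\lambda)$, since if $\phi_n$ were allowed to escape such an interval then $\cot(\pi\phi_n)$ would cease to be monotonic and the telescoping of $\sum_n|D_n - D_{n-1}|$ would break down. Once that normalization is in place, the Abel summation, the telescoping, and the trigonometric bookkeeping are all routine.
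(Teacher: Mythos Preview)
Your argument is essentially the paper's (the corrected Mordell/Cheng--Graham approach): same normalization of $f'$ into $(\lambda,1-\lambda)$, same Abel summation with weights $D_n=-\tfrac12-\tfrac{i}{2}\cot(\pi\phi_n)$, same telescoping of $\sum|D_n-D_{n-1}|$, and the same half-angle reduction to $\cot(\pi\lambda/2)\le 2U/\pi$. The one slip is that your summation-by-parts formula invokes $u_{N_2+1}$ and $\phi_{N_2}=f(N_2{+}1)-f(N_2)$, which require evaluating $f$ at $N_2{+}1\notin(a,b]$; the paper avoids this by stopping the identity $u_n=D_n(u_{n+1}-u_n)$ at $n=N_2-1$, so the final boundary term is $(1+D_{N_2-1})u_{N_2}$ (note $|1+D_n|=|D_n|$) and one arrives at $\tfrac12\tan(\pi\phi_{N_2-1}/2)$ rather than $\tfrac12\tan(\pi\phi_{N_2}/2)$.
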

\begin{remark}
This result is often attributed to the works of Kuzmin-Landau \cite{kuzminver, landauineq} in literature.
\end{remark}
\begin{proof}
First we notice that $0<U^{-1}\leq 1/2$. Next, we make note of the error in the proof of Lemma 2 in \cite{graham} and give a possible fix. This fix is based on the ideas found in \cite{mordell}. Most of the proof of Lemma 2 in \cite{graham} is valid except we note a typo on Page 1266 where the equality for $G(n)-G(n-1)$ should read
\begin{equation}
    G(n)-G(n-1) = \frac{1}{2i}(\cot(\pi g(n-1))-\cot(\pi g(n)))
\end{equation}

However, there is a fatal flaw that originates in the first inequality on Page 1267 of \cite{graham} because of missing absolute values after the first two cotangent terms. That inequality should instead read:
\begin{align}
    \left\lvert\sum_{n=L}^{M}e^{2\pi i f(n)}\right\rvert&\leq \frac{1}{2}(\cot(\pi g(L))-\cot(\pi g(M-1)))+\left\lvert \frac{1}{2}+\frac{i}{2}\cot(\pi g(L))\right\rvert+\left\lvert\frac{1}{2}-\frac{i}{2}\cot(\pi g(M-1))\right\rvert \label{ineq0}
\end{align}
With this fix in mind, we now provide a possible way to finish Cheng-Graham proof correctly giving us $(\ref{correct})$. To do so, we note:
\begin{align}
     \left\lvert\sum_{n=L}^{M}e^{2\pi i f(n)}\right\rvert&\leq \frac{1}{2}\left(\cot(\pi g(L))+\frac{1}{\sin(\pi g(L))}\right) +\frac{1}{2}\left(\frac{1}{\sin(\pi g(M-1))}-\cot (\pi g(M-1))\right)\label{ineq1}\\
    &=\frac{\cos (\pi g(L))+1}{2\sin (\pi g(L))}+\frac{1-\cos(\pi g(M-1))}{2\sin(\pi g(M-1)) \label{ineq2}}\\
    &=\frac{1}{2}\cot\left(\frac{\pi g(L)}{2}\right)+\frac{1}{2}\tan \left(\frac{\pi g(M-1)}{2}\right)\label{ineq3}\\
    &\leq \frac{1}{2}\cot\left(\frac{\pi U^{-1}}{2}\right)+\frac{1}{2}\tan \left(\frac{\pi (1-U^{-1})}{2}\right)\label{ineq4}\\
    &\leq \cot \left(\frac{\pi U^{-1}}{2}\right)\label{ineq5}\\
    &\leq \frac{2}{\pi}U \label{ineq6}
\end{align} 

For proof readability we make several remarks regarding the above inequalities here. First note that going from $(\ref{ineq0})$ to $(\ref{ineq1})$ we use
\begin{align}
 0<U^{-1}\leq &g(L), \hspace{2mm} g(M-1)\leq 1-U^{-1}<1, \label{relation}\\
     1+i\cot(x)&=\frac{ie^{-ix}}{\sin x},  \hspace{3mm} \sin(\pi x)>0 \hspace{2mm}\text{for}\hspace{2mm} x\in(0, 1). \label{cotcos}
\end{align}
Next to go from $(\ref{ineq1})$ to $(\ref{ineq2})$ we write $\cot(x)$ in terms of $\sin(x)$ and $\cos(x)$ and then gather like terms. To pass from $(\ref{ineq2})$ to $(\ref{ineq3})$ we use the following relation valid for $x\neq k\pi$ where $k\in \mathbb{Z}$:
\begin{equation*}
    \cot \left(\frac{x}{2}\right)= \frac{1+\cos x}{\sin x}, \hspace{3mm} \tan \left(\frac{x}{2}\right)= \frac{1-\cos x}{\sin x}.
\end{equation*}
To go from $(\ref{ineq3})$ to $(\ref{ineq4})$ we use bound on $g(L), g(M-1)$ in $(\ref{relation})$ along with the fact that on $(0, \pi/2)$, $\cot x$ is non-negative and decreasing function and $\tan x$ is non-negative and increasing function.
For inequality $(\ref{ineq4})$ to $(\ref{ineq5})$ we use the relation below valid for  $\theta\neq k\pi$ and $k\in \mathbb{Z}$:
\begin{equation*}
    \tan\left(\frac{\pi}{2}-\theta\right)=\cot\left(\theta\right).
\end{equation*}
And lastly inequality $(\ref{ineq6})$ follows from $(\ref{ineq5})$ from the fact that $\cot x\leq 1/x$ for $0<x<\pi/2$.
\end{proof} 

Furthermore, for an alternate proof of Lemma \ref{incorrectevery} one can refer to  \textup{\cite[Lemma 6.6]{tenanbaum}} albeit we note a couple of typos in that proof \footnote[2]{These errors were also pointed out by Kevin Ford in \cite{xyz}}, the estimate on $|c_n|$ in the beginning of the proof should include an equality because of equality in (\ref{cotcos}) and read:
\begin{equation*}
|c_n|=|1-c_n|=\frac{1}{2|\sin \pi y_n|}\leq \frac{1}{2\sin \pi \vartheta}
\end{equation*}
 Next,  the inequality at the end of that proof should involve a negative sign between cotangent terms and read:
\begin{equation}
    \left\lvert\sum_{1\leq n \leq N}e(x_n)\right\rvert\leq \frac{1}{2}\cot(\pi y_1)-\frac{1}{2}\cot (\pi y_{N-1})+|c_1|+|1-c_{N-1}|
\end{equation}

Moreover, Landau showed in \cite{landauineq} that the constant $2/\pi$ in $(\ref{correct})$ is the best possible. For historical context behind this result we refer to Reyna's work in \cite{reynacorrection} where additionally Reyna gives an alternate proof of Lemma \ref{incorrectevery} above. Although we make note of a typo in \textup{\cite[Lemma 2(a)]{reynacorrection}}, where the inequality on $b_k$ should read: $\pi \theta\leq b_k\leq \pi(1-\theta)$.

Next, having corrected Lemma 2 in \cite{graham} we now correct Lemma 3 in \cite{graham} which is a crucial tool used in literature to obtain explicit estimates.
\begin{lemma}
Assume that $f$ is a real-valued function with two consecutive derivatives on $[N+1, N+L].$ If there exists two real numbers $V<W$ with $W>1$ such that
\begin{equation}
    \frac{1}{W}\leq |f''(x)|\leq \frac{1}{V} \label{seconddevbound}
\end{equation}
for $x$ on $[N+1, N+L],$ then 
\begin{equation}
    \left\lvert\sum_{n=N+1}^{N+L}e^{2\pi i f(n)}\right\rvert \leq 2\left(\frac{L}{V}+2\right)\left(2\sqrt{\frac{W}{\pi}}+1\right) \label{correctchenggraham}
\end{equation}
\end{lemma}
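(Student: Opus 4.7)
The plan is to apply the corrected Kuzmin--Landau Lemma (Lemma \ref{incorrectevery}) on sub-intervals where $\|f'\|$ is bounded away from zero, and to bound the contribution from the remaining ``bad'' pieces trivially using the lower bound on $|f''|$. Without loss of generality I will assume $f''>0$ on $[N+1,N+L]$ (otherwise replace $f$ by $-f$, which conjugates the exponential sum and leaves its modulus unchanged), so that $f'$ is strictly increasing. Set $\alpha=f'(N+1)$ and $\beta=f'(N+L)$; by the mean value theorem and the upper estimate in $(\ref{seconddevbound})$, one has $\beta-\alpha\leq (L-1)/V\leq L/V$.

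The main step is a decomposition governed by a free parameter $U>0$ to be chosen later. For each integer $m$ with $\alpha-1/U\leq m\leq \beta+1/U$, define
\[
I_m \;=\; \{x\in[N+1,N+L]\ :\ |f'(x)-m|<1/U\}.
\]
Since $f'$ is monotonic and $|f''|\geq 1/W$, each $I_m$ is a subinterval of length at most $2W/U$, and hence contains at most $2W/U+1$ integers. Provided $1/U<1/2$, the $I_m$'s are pairwise disjoint and their complement in $[N+1,N+L]$ splits into at most $(\beta-\alpha)+2 \leq L/V+2$ subintervals on each of which $f'$ is monotonic and $\|f'\|\geq 1/U$. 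On each such complementary subinterval, Lemma \ref{incorrectevery} gives a contribution bounded by $2U/\pi$. Counting integers in the $I_m$'s gives a trivial contribution bounded by $(L/V+1)(2W/U+1)$. Adding the two contributions yields
\[
\left|\sum_{n=N+1}^{N+L}e^{2\pi if(n)}\right|
\;\leq\; \Bigl(\tfrac{L}{V}+1\Bigr)\Bigl(\tfrac{2W}{U}+1\Bigr)
\,+\,\Bigl(\tfrac{L}{V}+2\Bigr)\tfrac{2U}{\pi}.
\]

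To conclude, I would choose $U=\sqrt{\pi W}$ to balance the two terms, which makes $2W/U=2U/\pi=2\sqrt{W/\pi}$. Absorbing the $(L/V+1)$ coefficient into the slightly larger $(L/V+2)$ and combining the two terms gives
\[
\Bigl(\tfrac{L}{V}+2\Bigr)\bigl(2\sqrt{W/\pi}+1\bigr)
\,+\,\Bigl(\tfrac{L}{V}+2\Bigr)\bigl(2\sqrt{W/\pi}\bigr)
\;\leq\; 2\Bigl(\tfrac{L}{V}+2\Bigr)\bigl(2\sqrt{W/\pi}+1\bigr),
\]
which is exactly $(\ref{correctchenggraham})$. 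The main obstacles I anticipate are bookkeeping rather than conceptual: (i) verifying that the hypothesis $W>1$ guarantees the choice $U=\sqrt{\pi W}$ satisfies $1/U<1/2$, so that the $I_m$ are genuinely disjoint; (ii) correctly counting the number of complementary subintervals at the boundary (since $\alpha,\beta$ need not be integers, the first and last pieces may be partial); and (iii) confirming that Lemma \ref{incorrectevery} applies on each good piece, which requires $f'$ to be monotonic there and bounded away from an integer by $1/U$. Each of these is a straightforward check once the decomposition is fixed, and they together account for the slack that allows the cleaner factor of $2$ in the final bound.
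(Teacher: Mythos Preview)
Your proposal is correct and follows exactly the route the paper indicates: it simply refers back to the Cheng--Graham argument with the two changes you spell out, namely bounding each ``good'' sub-sum by $\tfrac{2}{\pi}U$ via the corrected Kuzmin--Landau Lemma~\ref{incorrectevery} and choosing $U=\sqrt{\pi W}$ (their $\Delta=1/\sqrt{\pi W}$). One small caveat on your obstacle~(i): the hypothesis $W>1$ only yields $1/U<1/\sqrt{\pi}\approx 0.564$, not $1/U<1/2$, so your check as phrased would fail; however for $1<W\le 4/\pi$ one has $V<W\le 4/\pi$ and hence $2(L/V+2)(2\sqrt{W/\pi}+1)>2\cdot \tfrac{\pi L}{4}\cdot\bigl(\tfrac{2}{\sqrt{\pi}}+1\bigr)>L$, so the trivial bound already gives~$(\ref{correctchenggraham})$ in that range.
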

\begin{remark}
Because of this correction, the leading term in the incorrect Cheng-Graham Lemma 3 is off by a factor of $\sqrt{2}$.
\end{remark}
\begin{proof}
The proof in \textup{\cite[Lemma 3]{graham}} is modified as follows: The $k-1$ sub-sums corresponding to the interval $[y_j, x_{j+1}]$ are bounded by $\frac{2}{\pi \Delta}$ instead of $1/(\pi \Delta)+1$. And we take $\Delta=1/\sqrt{\pi W}$ instead of $1/\sqrt{2\pi W}$. We also make note of a couple of typos in the proof: First, the estimate on $k$ should be $k\leq L/V+3$. Next, when estimating the sum trivially, the mean value theorem in this case should be applied to $(f')^{-1}$ instead of $f^{-1}.$
\end{proof}
Moreover using Platt-Trudgian's observation in  \textup{\cite[Lemma 1]{platt}} we obtain a slight improvement to $(\ref{correctchenggraham})$ in the form of
\begin{equation}
    \left\lvert\sum_{n=N+1}^{N+L}e^{2\pi i f(n)}\right\rvert \leq 2\left(\frac{L-1}{V}+2\right)\left(2\sqrt{\frac{W}{\pi}}+\frac{1}{2}\right)+1
\end{equation}
The corrected version $(\ref{correctchenggraham})$ in turn gives us new constants in the explicit third dervivative test found in \textup{\cite[Lemma 1.2]{hiary}} and again for completeness we state the corrected version here.
\begin{lemma}\label{derivative3test}\footnote[3]{This corrected version of the explicit third derivative test in turn changes the explicit van der Corput bound derived by Hiary in \textup{\cite[Theorem 1.1]{hiary}}. In that paper, Hiary obtained $|\zeta(1/2+it)|\leq 0.63t^{1/6}\log t$ at the time. This bound was an improvement to Platt-Trudgian's result in \textup{\cite[Theorem 1]{platt}} that stated $|\zeta(1/2+it)|\leq 0.732 t^{1/6}\log t$. With the correction provided in Lemma \ref{derivative3test}, the stimate obtained by Hiary in \cite{hiary} now becomes $|\zeta(1/2+it)|\leq 0.77t^{1/6}\log t$ (with $t_0=2\times 10^{10}$ in that paper). And since the Platt-Trudgian bound for $|\zeta(1/2+it)|$ uses the incorrect Cheng-Graham lemma, it may no longer be valid. 

Nonetheless the constant, 0.77, is currently being improved by the author and the result will be published soon along with some additional estimates for $|\zeta(1/2+it)|$.}
Let $f(x)$ be a real-valued function with four continuous derivatives on ${[N+1, N+L]}$. Suppose there are constants $W_3>1$ and $\lambda_3\geq 1$ such that $\frac{1}{W_3}\leq |f^{(3)}(x)|\leq \frac{\lambda_3}{W_3}$ for ${N+1\leq x\leq N+L}$. If $\eta_3>0,$ then
\begin{equation*}
    \left\lvert\sum_{n=N+1}^{N+L}e^{2\pi i f(n)}\right\rvert^2 \leq (LW_3^{-1/3}+\eta_3)(\alpha_3 L+\beta_3 W_3^{2/3}),
\end{equation*}
where
\begin{align*}
\alpha_3 &=\frac{1}{\eta_3}+\frac{32\lambda}{15\sqrt{\pi}}\sqrt{\eta_3+W_3^{-1/3}}+\frac{2\lambda_3\eta_3}{W_3^{1/3}}+\frac{2\lambda_3}{W_3^{2/3}},    \\
\beta_3 &= \frac{64}{3\sqrt{\pi}\sqrt{\eta_3}}+\frac{4}{W_3^{1/3}}.
\end{align*}
\end{lemma}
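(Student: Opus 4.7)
The plan is to combine the Weyl--van der Corput A-process with the corrected second derivative test (inequality \ref{correctchenggraham}) proved in the previous lemma. The guiding principle is classical: by the mean value theorem, differencing once converts third-derivative control into second-derivative control on the shifted differences $g_h(x) := f(x+h)-f(x)$, at which point the corrected Cheng--Graham estimate applies directly.

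First I would invoke the Weyl--van der Corput inequality: for any integer $H$ with $1\le H\le L$,
\begin{equation*}
\left|\sum_{n=N+1}^{N+L} e^{2\pi i f(n)}\right|^2 \leq \frac{L+H-1}{H}\left(L+2\sum_{h=1}^{H-1}\left(1-\frac{h}{H}\right)\left|\sum_{n}e^{2\pi i g_h(n)}\right|\right),
\end{equation*}
where the inner sum runs over those $n$ for which both $n,n+h\in[N+1,N+L]$. The parameter $H$ will be chosen so that $H\sim \eta_3 W_3^{1/3}$; with this choice the outer prefactor $(L+H-1)/H$ factors as $W_3^{-1/3}(LW_3^{-1/3}+\eta_3)/\eta_3$ up to lower-order corrections, which is exactly what produces the $LW_3^{-1/3}+\eta_3$ factor in the statement.

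Next, for each fixed $h$, the mean value theorem gives $g_h''(x)=h f^{(3)}(\xi)$ for some $\xi\in(x,x+h)$, so the hypothesis on $f^{(3)}$ yields $h/W_3\leq |g_h''(x)|\leq h\lambda_3/W_3$. Applying the corrected second derivative test (\ref{correctchenggraham}) with $V=W_3/(h\lambda_3)$ and $W=W_3/h$ gives
\begin{equation*}
\left|\sum_{n}e^{2\pi i g_h(n)}\right|\leq 2\left(\frac{(L-h)h\lambda_3}{W_3}+2\right)\left(2\sqrt{\frac{W_3}{\pi h}}+1\right).
\end{equation*}
Expanding this product and summing over $1\le h<H$ using the standard estimates $\sum_{h<H}h^{-1/2}\leq 2\sqrt{H}$, $\sum_{h<H}h^{1/2}\leq \tfrac{2}{3}H^{3/2}$, $\sum_{h<H}h\leq H^2/2$, and $\sum_{h<H}1\leq H$, converts the right-hand side of the Weyl--van der Corput inequality into a bound of the shape $(LW_3^{-1/3}+\eta_3)(\alpha_3 L+\beta_3 W_3^{2/3})$ after the substitution $H=\lfloor\eta_3 W_3^{1/3}\rfloor$.

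The remainder is bookkeeping: the $1/\eta_3$ in $\alpha_3$ comes from the $L/H$ contribution of the diagonal term, the $\tfrac{32\lambda_3}{15\sqrt{\pi}}\sqrt{\eta_3+W_3^{-1/3}}$ piece arises from the cross-term $\tfrac{4\lambda_3}{\sqrt{\pi}}\cdot\tfrac{2}{3}H^{3/2}W_3^{-1/2}$ combined with the Weyl--van der Corput prefactor, and the $\tfrac{64}{3\sqrt{\pi}\sqrt{\eta_3}}W_3^{2/3}$ in $\beta_3$ emerges from $\sum_{h<H}4\sqrt{W_3/(\pi h)}\lesssim 8\sqrt{HW_3/\pi}$ after dividing through by $H$. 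The main obstacle is precisely this constant-tracking step: one must carefully propagate the additive $+1$'s and $+2$'s from the second derivative test through the Abel-type weighting $(1-h/H)$, since these generate exactly the correction terms $2\lambda_3\eta_3/W_3^{1/3}+2\lambda_3/W_3^{2/3}$ in $\alpha_3$ and $4/W_3^{1/3}$ in $\beta_3$, and it is easy to lose a factor of $2$ or a sign when collecting them.
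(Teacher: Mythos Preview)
Your approach is essentially the same as the paper's: apply the Weyl--van der Corput inequality, use the mean value theorem to convert third-derivative control on $f$ into second-derivative control on $g_h(x)=f(x+h)-f(x)$, insert the corrected second-derivative test \eqref{correctchenggraham}, and finally choose $H=\lceil \eta_3 W_3^{1/3}\rceil$. The paper's own proof simply says ``follow Hiary's proof of Lemma~1.2 with the corrected bound
\[
|S_m'(L)|\leq \frac{4\lambda_3 L\sqrt{m/W_3}}{\sqrt{\pi}}+\frac{2\lambda_3 Lm}{W_3}+\frac{8\sqrt{W_3/m}}{\sqrt{\pi}}+4
\]
substituted for equation~(43) there,'' which is exactly the expansion of \eqref{correctchenggraham} with $V=W_3/(m\lambda_3)$, $W=W_3/m$ that you wrote down.

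One bookkeeping point to watch: the exact constants $\tfrac{32}{15}$ and $\tfrac{64}{3}$ in $\alpha_3,\beta_3$ do \emph{not} arise from the unweighted estimates $\sum_{h<H}h^{1/2}\le\tfrac{2}{3}H^{3/2}$ and $\sum_{h<H}h^{-1/2}\le 2\sqrt{H}$ you listed; those would give $\tfrac{16}{3}$ and $32$ instead. You need the weighted bound
\[
\sum_{m=1}^{M}\Bigl(1-\tfrac{m}{M}\Bigr)m^{q}\;\le\;\frac{M^{q+1}}{(q+1)(q+2)},
\]
which for $q=\tfrac12$ and $q=-\tfrac12$ yields $\tfrac{4}{15}M^{3/2}$ and $\tfrac{4}{3}M^{1/2}$, and these are precisely what produce $\tfrac{32}{15}$ and $\tfrac{64}{3}$. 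You allude to the Abel-type weighting $(1-h/H)$ later, so you are clearly aware of it; just be sure to use the weighted estimates from the outset when tracking constants.
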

\begin{proof}
To obtain this corrected version, we use $(\ref{correctchenggraham})$ above in the proof of Lemma 1.2 in \cite{hiary} and replace the estmiate for $|S_m'(L)|$ in equation (43) of that proof with 
\begin{equation}
    |S_m'(L)|\leq \frac{4\lambda_3 L\sqrt{m/W_3}}{\sqrt{\pi}}+\frac{2\lambda_3 Lm}{W_3}+\frac{8\sqrt{W_3/m}}{\sqrt{\pi}}+4.
\end{equation}
\end{proof}

We note that here since $\alpha_3, \beta_3$ are decreasing function in $W_3$ and $W_3>1$ we get that the estimates:
\begin{align*}
\alpha_3 \leq \widetilde{\alpha_{3}}=\frac{1}{\eta_3}+\frac{32\lambda}{15\sqrt{\pi}}\sqrt{\eta_3+1}+2\lambda_3\eta_3+ 2\lambda_3,  \hspace{4mm}  
\beta_3 &\leq \widetilde{\beta_{3}}= \frac{64}{3\sqrt{\pi}\sqrt{\eta_3}}+4.
\end{align*}

Note that we will obtain our explicit result for $k$ derivative test by using the $k-1$ test. Hence, for clarity we label the constants and other quantities in these results in such a way that the subscripts indicate the derivative test they arise from. With Lemma $\ref{derivative3test}$ at hand we are now ready to find explicit versions of fourth and fifth derivative tests.  

\section{Preliminary Results}
\begin{lemma}\label{derivative4}
Let $f(x)$ be a real-valued function with four continuous derivatives on ${[N+1, N+L]}$. Suppose there are constants $W_4>1$ and $\lambda_4\geq 1$ such that $\frac{1}{W_4}\leq |f^{(4)}(x)|\leq \frac{\lambda_4}{W_4}$ for ${N+1\leq x\leq N+L}$. If $\eta_4>0,$ then
\begin{equation*}
    |S(L)|^2\coloneqq\left\lvert\sum_{n=N+1}^{N+L}e^{2\pi i f(n)}\right\rvert^2 \leq (LW_4^{-1/7}+\eta_4)(\alpha_4 L+\gamma_4\sqrt{L}W_4^{2/7}+\beta_4 W_4^{3/7}),
\end{equation*}
where
\begin{align*}
\alpha_4 &=\frac{1}{\eta_4}+\frac{72}{91}\sqrt{\widetilde\alpha_{3}}(\eta_4+W_4^{-1/7})^{1/6}, \hspace{3mm}
\gamma_4 = \frac{72}{55}\sqrt{\widetilde\beta_{3}}\eta_4^{-1/6}+\sqrt{\eta_3\widetilde\alpha_{3}}W_4^{-1/7},\hspace{3mm}
\beta_4 = \frac{18}{20}\sqrt{\widetilde\beta_{3}\eta_3}\eta_4^{-1/3}.
\end{align*}
\end{lemma}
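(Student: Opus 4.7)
The plan is to derive the fourth-derivative test from the third-derivative test (Lemma 2.3) by the van der Corput $A$-process, i.e., Weyl differencing, just as Cheng--Graham and Hiary pass between consecutive derivative tests. The shape of the stated bound---the $W_4^{1/7}$ exponent, the three-term second factor $\alpha_4 L+\gamma_4\sqrt{L}W_4^{2/7}+\beta_4 W_4^{3/7}$, and the rational coefficients $72/91$, $72/55$, $18/20$---is exactly what this recursion produces from the two-term $\alpha_3 L+\beta_3 W_3^{2/3}$ in Lemma 2.3.

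First I would apply a Weyl-differencing inequality of the form
\ba
|S(L)|^2 \leq \frac{L+H-1}{H}\left(L + 2\sum_{m=1}^{H-1}(H-m)\,|S_m(L)|\right),
\ea
where $H\geq 1$ is an integer parameter to be chosen later as $H=\lceil\eta_4 W_4^{1/7}\rceil$ (mirroring the choice $H\approx\eta_3 W_3^{1/3}$ that produces the prefactor $LW_3^{-1/3}+\eta_3$ of Lemma 2.3), and $S_m(L)=\sum_{n=N+1}^{N+L-m}e^{2\pi i(f(n+m)-f(n))}$. This choice is what brings the Weyl prefactor $(L+H-1)/H$ into the advertised $(LW_4^{-1/7}+\eta_4)$ shape.

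Next, for each fixed $m\in\{1,\dots,H-1\}$, the mean value theorem applied to $f^{(3)}$ gives
\ba
g_m^{(3)}(x) \;=\; f^{(3)}(x+m)-f^{(3)}(x) \;=\; \int_0^m f^{(4)}(x+s)\,ds,
\ea
so $m/W_4 \leq |g_m^{(3)}(x)| \leq m\lambda_4/W_4$. Hence $g_m(x)=f(x+m)-f(x)$ meets the hypotheses of Lemma 2.3 with $W_3=W_4/m$ and $\lambda_3=\lambda_4$; the condition $W_3>1$ just says $m<W_4$, which is automatic for $H$ of order $W_4^{1/7}$ once $W_4>1$. Applying Lemma 2.3 together with the monotonicity replacements $\alpha_3\leq\widetilde\alpha_3$ and $\beta_3\leq\widetilde\beta_3$ recorded in the excerpt yields
\ba
|S_m(L)|^2 \leq \left(L m^{1/3}W_4^{-1/3}+\eta_3\right)\left(\widetilde\alpha_3 L + \widetilde\beta_3\, m^{-2/3}W_4^{2/3}\right).
\ea
I would then expand this product, take square roots via $\sqrt{A_1+\cdots+A_4}\leq\sum_{i}\sqrt{A_i}$, and obtain four pieces whose $m$-dependence is $m^{1/6}$, $m^{-1/6}$, $m^{0}$, and $m^{-1/3}$.

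The remaining step is to sum each piece against the weights $(H-m)$, bounding $\sum_{m=1}^{H-1}(H-m)m^\alpha \leq \int_0^H (H-x)x^\alpha\,dx = H^{\alpha+2}/((\alpha+1)(\alpha+2))$. For $\alpha\in\{1/6,\,-1/6,\,-1/3\}$ this produces the rational constants $36/91$, $36/55$, $9/10$, which after the overall factor of $2$ become precisely the $72/91$, $72/55$, $18/20$ appearing in $\alpha_4$, $\gamma_4$, $\beta_4$. The $\alpha=0$ piece contributes $(H-1)H/2\cdot\sqrt{\widetilde\alpha_3\eta_3 L}$, which after the substitution $H\approx\eta_4 W_4^{1/7}$ absorbs into the $\gamma_4\sqrt{L}W_4^{2/7}$ slot. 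Regrouping the six resulting monomials in $L$ and $W_4$ and factoring out $(LW_4^{-1/7}+\eta_4)$ produces the advertised inequality. The main obstacle I foresee is not conceptual but arithmetic: choosing $H$ as an integer so that the Weyl prefactor factorizes cleanly, and verifying that the factor $(\eta_4+W_4^{-1/7})^{1/6}$ appearing inside $\alpha_4$ emerges correctly from rewriting $H^{13/6}$ in terms of the first factor $(LW_4^{-1/7}+\eta_4)$.
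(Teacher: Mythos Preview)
Your approach is exactly the paper's: Weyl--van der Corput differencing, mean value theorem to put $g_m=f(\cdot+m)-f(\cdot)$ into the third-derivative framework with $W_3=W_4/m$, Lemma~2.3 with $\alpha_3\le\widetilde\alpha_3$, $\beta_3\le\widetilde\beta_3$, square-root subadditivity, the summation estimate $\sum_{m\le M}(1-m/M)m^q\le M^{q+1}/((q+1)(q+2))$, and the choice $M=\lceil\eta_4 W_4^{1/7}\rceil$. The one slip is in your stated Weyl inequality: the weights should be $(1-m/H)$, not $(H-m)$ (equivalently, your version is off by a factor of $H$ inside the parenthesis); with the correct weights the paper's prefactor is $(L+M-1)$ rather than $(L+H-1)/H$, and it is that $(L+M-1)\le W_4^{1/7}(LW_4^{-1/7}+\eta_4)$ which, after distributing the $W_4^{1/7}$, yields the first factor and makes the exponents and the $(\eta_4+W_4^{-1/7})^{1/6}$ fall out exactly as you anticipated.
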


\begin{proof}
We will use the Weyl-van der Corput Lemma in Cheng and Graham \textup{\cite[Lemma 5]{graham}}, but use the form given at the bottom of page 1273 as well as a further refinement by Platt and Trudgian \textup{\cite[Lemma 2]{platt}}. In all, if $M$ is a positive integer, then 
\begin{equation}
\lvert S(L)\rvert^2 \leq (L+M-1)\left(\frac{L}{M}+\frac{2}{M}\sum_{m=1}^{M}\left(1-\frac{m}{M}\right)\lvert{S_m'(L)}\rvert\right), \label{s14}
\end{equation}
where 
\begin{equation}
    S_m'(L)=\sum_{r=N+1}^{N+L-m}e^{2\pi i (f(r+m)-f(r))}.
\end{equation}
Here, we can assume that $m<L$ and $L>1.$ Otherwise, the sum $S_m'(L)$ is empty and does not contribute to the upper bound.

Now, let $g(x)\coloneqq f(x+m)-f(x)$ where $N+1\leq x\leq N+L-m$. Then
${g'''(x)=f'''(x+m)-f'''(x)}.$
Hence, using the mean value theorem we obtain
$g'''(x)=mf^{(4)}(\xi)$
for some \\
${\xi\in (x, x+m)\subset[N+1, N+L]}$. Next, using the given bound on $f^{(4)}(x)$, we deduce that 

\begin{equation}
\frac{m}{W_4}\leq\lvert g'''(x)\rvert \leq \frac{m\lambda_4}{W_4}, \hspace{5mm} \mbox{$(N+1\leq x\leq N+L-m)$}.
\end{equation}

\noindent
Applying Lemma $\ref{derivative3test}$ to bound $\lvert S_m'(L)\rvert^2$, multiplying the terms out and then using the inequalities:
\begin{align}
    \sqrt{x_1+x_2+\ldots}&\leq \sqrt{x_1}+\sqrt{x_2}+\ldots, \hspace{8mm}\text{where $x_1, x_2, \ldots>0$} \nonumber\\
    \alpha_3&\leq \widetilde\alpha_{3}\hspace{2mm} \text{and} \hspace{2mm}\beta_3\leq \widetilde\beta_{3}, \hspace{3mm} L-m< L\hspace{2mm} \text{for} \hspace{2mm} m\geq 1. \label{impinequalities}
\end{align}
\begin{align}
   \lvert S_m'(L)\rvert^2 < \sqrt{\widetilde\alpha_{3}}LW_4^{-1/6}m^{1/6}+\sqrt{\widetilde\beta_{3} L}W_4^{1/6}m^{-1/6} &+\sqrt{\eta_3\widetilde\alpha_{3}L}+\sqrt{\widetilde\beta_{3}\eta_3}W_4^{1/3}m^{-1/3}.\label{b14}
\end{align}
Next, let us bound $\displaystyle\sum_{m=1}^{M}\left(1-\frac{m}{M}\right) \lvert S_m'(L)\rvert$ using (\ref{b14}) and the below estimate valid for $-1<q<1$:
\begin{equation}
    \sum_{m=1}^{M}\left(1-\frac{m}{M}\right)m^{q} \leq \frac{M^{q+1}}{(q+1)(q+2)}. \label{eulerineq}
\end{equation}

To prove the above estimate for $q\geq 0$ we use \textup{\cite[Lemma 7]{graham}} and for $q<0$ we replace the sum with an integral. After obtaining such bounds we substitute them back in ($\ref{s14}$) and get 
\begin{align}
\begin{split}\label{s24}
   \lvert S(L)\rvert^2 \leq {}& (L+M-1)\Bigg(\frac{L}{M}+\frac{72}{91}\sqrt{\widetilde\alpha_{3}}LW_4^{-1/6}M^{1/6}+\frac{72}{55}\sqrt{\widetilde\beta_{3} L}W_4^{1/6}M^{-1/6}\\
         & \hspace{40mm}+\sqrt{\eta_3\widetilde\alpha_{3} L}+\frac{18}{10}\sqrt{{\widetilde\beta_{3}}\eta_3}W_4^{1/3}M^{-1/3}\Bigg).
\end{split}
\end{align}
Now we would like to make the first two terms in ($\ref{s24}$) of the same magnitude to minimize the rhs. This can be achieved if we choose $M =\lceil\eta_4 W_4^{1/7}\rceil$ for some free parameter $\eta_4>0$ that can be optimized. With this choice of $M$ we obtain the inequality $\eta_4 W^{1/7}\leq M\leq \eta_4 W^{1/7} +1$. Using this inequality and then factoring $W^{1/7}$ term from the first parenthesis and multiplying it in the second we deduce that
\begin{align}
\lvert S(L)\rvert^2 &\leq  (LW_4^{-1/7}+\eta_4)W_4^{1/7}\Bigg(\Bigg({\eta_4^{-1} W_4^{-1/7}}+\frac{72}{91}\sqrt{\widetilde\alpha_{3}}\left(\frac{\eta_4 W_4^{1/7}+1}{W_4}\right)^{1/6}\Bigg)L\\
     &+\Bigg(\frac{72}{55}\sqrt{\widetilde\beta_{3}}\eta_4^{-1/6} W_4^{1/7}+\sqrt{\eta_3\widetilde\alpha_{3}}\Bigg)\sqrt{L}+\frac{18}{10}\sqrt{{\widetilde\beta_{3}}\eta_3}\eta_4^{-1/3} W_4^{6/21}\Bigg)\\
 &\leq (LW_4^{-1/7}+\eta_4)(\alpha_4 L+\gamma_4\sqrt{L}W_4^{2/7}+\beta_4 W_4^{3/7}) \label{f14}
\end{align}
where
$\alpha_4, \gamma_4$, and $\beta_4$ are defined as in the statement of the lemma. 
\end{proof}

\begin{lemma}\label{fifthderivative}
Let $f(x)$ be a real-valued function with five continuous derivatives on ${[N+1, N+L]}$. Suppose there are constants $W_5>1$ and $\lambda_5\geq 1$ such that $\frac{1}{W_5}\leq |f^{(5)}(x)|\leq \frac{\lambda_5}{W_5}$ for ${N+1\leq x\leq N+L}$. If $\eta_5>0,$ then
\begin{equation*}
    \left\lvert\sum_{n=N+1}^{N+L}e^{2\pi i f(n)}\right\rvert^2 \leq (LW_5^{-1/15}+\eta_5)(\alpha_5 L+\tau_5L^{3/4}W_5^{2/15}+\gamma_5\sqrt{L}W_5^{3/15}+\omega_5L^{1/4}W_5^{1/5}+\beta_5 W_5^{4/15});
\end{equation*}
where
\begin{align*}
\alpha_5 &=\frac{1}{\eta_5}+\frac{392}{435}\sqrt{\widetilde\alpha_4}\left(\eta_5+W_5^{-1/15}\right)^{1/14}, \hspace{3mm}
\tau_5 =\frac{392}{351}\sqrt{\widetilde\gamma_4}\eta_5^{-1/14},\hspace{3mm}
\gamma_5 = \frac{98}{78}\sqrt{\beta_4}\eta_5^{-1/7} +\sqrt{\eta_4 \widetilde\alpha_4 }W_5^{-2/15},\\
\omega_5 &= \frac{98}{78}\sqrt{\eta_4\widetilde\gamma_4}\eta_5^{-1/7},\hspace{3mm}
\beta_5 = \frac{392}{275}\sqrt{{\eta_4}\beta_4}\eta_5^{-3/14}.
\end{align*}
\end{lemma}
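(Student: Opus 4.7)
The plan is to mirror the proof of Lemma \ref{derivative4}, now applying the Weyl--van der Corput inequality to reduce the fifth derivative test to the fourth derivative test just established. I would start from the same Weyl--van der Corput bound used in the previous lemma, namely
\[
\lvert S(L)\rvert^2 \leq (L+M-1)\left(\frac{L}{M}+\frac{2}{M}\sum_{m=1}^{M}\Big(1-\frac{m}{M}\Big)\lvert S_m'(L)\rvert\right),
\]
where $S_m'(L)=\sum_{r=N+1}^{N+L-m} e^{2\pi i(f(r+m)-f(r))}$, $M$ a positive integer to be chosen, and $1\leq m<L$. Setting $g(x)=f(x+m)-f(x)$ and applying the mean value theorem to $g^{(4)}(x)=f^{(4)}(x+m)-f^{(4)}(x)$ yields $m/W_5 \leq \lvert g^{(4)}(x)\rvert \leq m\lambda_5/W_5$ on $[N+1,N+L-m]$, so Lemma \ref{derivative4} applies to $S_m'(L)$ with $W_4 = W_5/m$ and $\lambda_4 = \lambda_5$.

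Next I would expand the resulting product bound for $\lvert S_m'(L)\rvert^2$ into six monomials in $L$, $W_5/m$, and $\eta_4$, then take square roots using $\sqrt{x_1+x_2+\cdots}\leq \sqrt{x_1}+\sqrt{x_2}+\cdots$, replacing $\alpha_4,\gamma_4$ by their $W_4$-independent upper bounds $\widetilde\alpha_4,\widetilde\gamma_4$ (analogously to the passage from $(\alpha_3,\beta_3)$ to $(\widetilde\alpha_3,\widetilde\beta_3)$ used in Lemma \ref{derivative4}). This produces an estimate of the form
\[
\lvert S_m'(L)\rvert \leq \sum_{j=1}^{6} c_j\, L^{a_j} W_5^{b_j} m^{d_j},
\]
with the six $m$-exponents $d_j \in \{1/14,\,0,\,-1/14,\,-1/7,\,-1/7,\,-3/14\}$ coming from pairing the two factors in $(LW_4^{-1/7}+\eta_4)$ with the three terms $\alpha_4 L$, $\gamma_4\sqrt{L}W_4^{2/7}$, $\beta_4 W_4^{3/7}$.

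I would then sum against the Fej\'er weights $(1-m/M)$ via the inequality
\[
\sum_{m=1}^{M}\Big(1-\frac{m}{M}\Big)m^{q}\leq \frac{M^{q+1}}{(q+1)(q+2)},\qquad q>-1,
\]
already used in the proof of Lemma \ref{derivative4} (for $q\geq 0$ by \cite[Lemma 7]{graham}, for $-1<q<0$ by integral comparison). Multiplying by $2/M$ and inserting the result into the Weyl--van der Corput bound gives an explicit polynomial in $L$ and $M$ with exponents
$\{L/M,\; LW_5^{-1/14}M^{1/14},\; \sqrt{L},\; L^{3/4}W_5^{1/14}M^{-1/14},\; L^{1/4}W_5^{1/7}M^{-1/7},\; \sqrt{L}W_5^{1/7}M^{-1/7},\; W_5^{3/14}M^{-3/14}\}$.

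Finally I would choose $M=\lceil \eta_5 W_5^{1/15}\rceil$, which forces the first two leading monomials $L/M$ and $LW_5^{-1/14}M^{1/14}$ to have the common size $LW_5^{-1/15}$ (via $-1/14+1/(14\cdot 15)=-1/15$). The bracketing $\eta_5 W_5^{1/15}\leq M\leq \eta_5 W_5^{1/15}+1$ gives $L+M-1\leq (LW_5^{-1/15}+\eta_5)W_5^{1/15}$; pulling this factor outside and regrouping the remaining monomials by the powers $L,\,L^{3/4},\,\sqrt{L},\,L^{1/4},\,1$ yields the claimed shape with coefficients matching $\alpha_5,\tau_5,\gamma_5,\omega_5,\beta_5$, where the two-term structure of $\alpha_5$ and $\gamma_5$ comes from the contributions of $L/M$ (resp.\ the $\sqrt{\alpha_4\eta_4 L}$ term) combining with the balanced leading pieces.

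The main obstacle will be the bookkeeping: after the square-root step there are six $m$-dependent terms instead of the four in Lemma \ref{derivative4}, and once multiplied by $(L+M-1)/M$ they produce enough distinct monomials that the correct regrouping into the five coefficients $(\alpha_5,\tau_5,\gamma_5,\omega_5,\beta_5)$ requires careful exponent arithmetic. The key balancing identity $-1/14+1/(14\cdot 15)=-1/15$ is what fixes $M\sim W_5^{1/15}$ and hence the prefactor $(LW_5^{-1/15}+\eta_5)$; verifying that no stray term violates the claimed form is the delicate part.
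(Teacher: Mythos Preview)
Your proposal is correct and follows essentially the same route as the paper's proof: apply the Weyl--van der Corput inequality, bound $|S_m'(L)|$ via Lemma~\ref{derivative4} with $W_4=W_5/m$ and the $W_4$-independent majorants $\widetilde\alpha_4,\widetilde\gamma_4$, sum the six resulting $m$-monomials against the Fej\'er weights using \eqref{eulerineq}, and choose $M=\lceil\eta_5 W_5^{1/15}\rceil$ to balance the leading terms. The bookkeeping you flag as the main obstacle is exactly what the paper carries out explicitly, and your identification of which pairs of terms merge into $\alpha_5$ and $\gamma_5$ matches the paper's derivation.
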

\begin{proof}
The proof of this lemma is very similar to Lemma $\ref{derivative4}$ where we first bound $|S_m'(L)|$ by letting $g(x)\coloneqq f(x+m)-f(x)$ where $N+1\leq x\leq N+L-m$. Then ${g^{(4)}(x)=f^{(4)}(x+m)-f^{(4)}(x)}$ from which we can deduce using the mean value theorem and given bound on $f^{(5)}(x)$ that 
\begin{equation}
\frac{m}{W_5}\leq\lvert g^{(4)}(x)\rvert \leq \frac{m\lambda_5}{W_5}, \hspace{5mm} \mbox{$(N+1\leq x\leq N+L-m)$}.
\end{equation}
Applying the result for fourth derivative stated in Lemma $\ref{derivative4}$ along with the estimates
\begin{equation} 
\alpha_4 \leq \widetilde{\alpha_4}=\frac{1}{\eta_4}+\frac{72}{91}\sqrt{\widetilde\alpha_{3}}(\eta_4+1)^{1/6},   \hspace{5mm}
\gamma_4 \leq \widetilde{\gamma_4}=\frac{72}{55}\sqrt{\widetilde\beta_{3}}\eta_4^{-1/6}+\sqrt{\eta_3\widetilde\alpha_{3}}
\end{equation}
 and inequalities similar to $(\ref{impinequalities})$  to bound $\lvert S_m'\rvert^2$, we get
\begin{align}
   \left\lvert S_m'(L)\right\rvert&\leq \sqrt{\widetilde{\alpha_4}}LW_5^{-1/14}m^{1/14}+\sqrt{\widetilde{\gamma_4}}L^{3/4}W_5^{1/14}m^{-1/14}+\sqrt{\beta_4 L}W_5^{1/7}m^{-1/7} \nonumber\\
   &+\sqrt{\eta_4\widetilde{\alpha_4} L}+\sqrt{\eta_4\widetilde{\gamma_4}}L^{1/4}W_5^{1/7}m^{-1/7}+\sqrt{\eta_4 \beta_4}W_5^{3/14}m^{-3/14}. \label{b1}
\end{align}
Now we bound $\displaystyle\sum_{m=1}^{M}\left(1-\frac{m}{M}\right)\left\lvert\sum_{r=N+1}^{N+L-m}e^{2\pi i (g(r))}\right\rvert$ using ($\ref{b1}$) and $(\ref{eulerineq})$ and then substitute them  in an expression like $(\ref{s14})$ to get 
\begin{align}
    \Bigg\lvert{\sum_{n=N+1}^{N+L}e^{2\pi i f(n)}}\Bigg\rvert^2&\leq (L+M-1)\Bigg(\frac{L}{M}+\frac{392}{435}\sqrt{\widetilde\alpha_4}LW_5^{-1/14}M^{1/14}+\frac{392}{351}\sqrt{\widetilde\gamma_4}L^{3/4}W_5^{1/14}M^{-1/14}\nonumber\\
     &\hspace{20mm}+\frac{98}{78}\sqrt{\beta_4L}W_5^{1/7}M^{-1/7}+\sqrt{\eta_4 \widetilde\alpha_4 L}+\frac{98}{78}\sqrt{\eta_4\widetilde\gamma_4}L^{1/4}W_5^{1/7}M^{-1/7}\nonumber\\
     &\hspace{20mm}+\frac{392}{275}\sqrt{{\eta_4}\beta_4}W_5^{3/14}M^{-3/14}\Bigg) \label{s2}
\end{align}
Now we would like to make the first two terms in ($\ref{s2}$) of the same magnitude to minimize the rhs in $(\ref{s2})$. This can be achieved if we choose $M =\lceil\eta_5 W_5^{1/15}\rceil$ for some free parameter $\eta_5>0$ that can be optimized. Next, using the inequality $\eta_5 W^{1/15}\leq M\leq \eta_5 W_5^{1/15} +1$ obtained because of the choice of $M$ and using similar algebraic manipulations as in proof of Lemma $\ref{derivative4}$ we deduce that
\begin{align}
    \left\lvert{\sum_{n=N+1}^{N+L}e^{2\pi i f(n)}}\right\rvert^2 &\leq  (LW_5^{-1/15}+\eta_5)\Bigg(\Bigg({\eta_5^{-1} }+\frac{392}{435}\sqrt{\widetilde\alpha_4}\left(\eta_5+W_5^{-1/15}\right)^{1/14}\Bigg)L\nonumber\\
     &+\frac{392}{351}\sqrt{\widetilde\gamma_4}\eta_5^{-1/14} L^{3/4}W_5^{2/15}+\Bigg(\frac{98}{78}\sqrt{\beta_4}\eta_5^{-1/7} +\sqrt{\eta_4 \widetilde\alpha_4}W_5^{-2/15}\Bigg)\sqrt{L}W_5^{1/5}\nonumber\\
     &+\frac{98}{78}\sqrt{\eta_4\widetilde\gamma_4}\eta_5^{-1/7}L^{1/4}W_5^{1/5}+\frac{392}{275}\sqrt{{\eta_4}\beta_4}\eta_5^{-3/14}W_5^{4/15}\Bigg).
\end{align}
This finally gives us:
\begin{equation}
    \left\lvert\sum_{n=N+1}^{N+L}e^{2\pi i f(n)}\right\rvert^2 \leq (LW_5^{-1/15}+\eta_5)(\alpha_5 L+\tau_5L^{3/4}W_5^{2/15}+\gamma_5\sqrt{L}W_5^{3/15}+\omega_5L^{1/4}W_5^{1/5}+\beta_5 W_5^{4/15}) \label{f1}
\end{equation}
where $\alpha_5, \tau_5, \gamma_5, \omega_5$ and $\beta_5$ are defined as in the statement of the lemma.
\end{proof}
Note that Lemmas \ref{derivative3test}, \ref{derivative4} and \ref{fifthderivative} are explicit versions of processes $AB, A^2B, A^3B$ in the theory of exponent pairs respectively. For an introduction to the theory of exponent pairs the author refers the reader to \cite{grahambook}. Moreover these lemmas give a saving of $\approx W^{n}$ when compared to the trivial bound where $n=\displaystyle-\frac{1}{3}, -\frac{1}{7}$ and $-\displaystyle\frac{1}{15}$ for Lemmas $\ref{derivative3test}, \ref{derivative4}$ and $\ref{fifthderivative}$. In application, it is often unclear on how to choose the correct derivative test to obtain an estimate. For instance as it will be seen later, the choice of $W$, dictates the length of the interval over which the $k$-th derivative test is applied while bounding the zeta function. This along with the method used to estimate the initial sum determines the derivative test to be applied. 

\begin{lemma}\label{gammabound}
If $s=\sigma+it$ where $\sigma>0$ and $t>0 $ then we have
\begin{equation}
   \sqrt{2\pi}e^{-\pi t/2}\xi_1(\sigma, t)t^{\sigma-1/2} \leq |\Gamma(s)|\leq \sqrt{2\pi}e^{-\pi t/2}\xi_2(\sigma, t)t^{\sigma-1/2} \label{gammaexpbound}
\end{equation}
where 
\begin{align*}
\xi_1(\sigma, t)&=\exp\left(-\frac{\sigma}{12t^2}-\frac{\pi}{24t}-\frac{\sigma^3}{3t^2}\right),\\
\xi_2(\sigma, t)&=\exp\left({\left(\sigma-\frac{1}{2}\right)\frac{\sigma^2}{2t^2}+\frac{\sigma}{12t^2}+\frac{\pi}{24t}}\right).
\end{align*}
\end{lemma}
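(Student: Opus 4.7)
The plan is to derive both inequalities from Stirling's formula in Binet's form: for $\mathrm{Re}(s)>0$,
\[
\log \Gamma(s) = \left(s - \frac{1}{2}\right)\log s - s + \frac{1}{2}\log(2\pi) + \mu(s),
\]
where $\mu(s) = 2\int_0^\infty \arctan(u/s)/(e^{2\pi u}-1)\,du$ is the Binet remainder. Writing $s = \sigma + it$ with $\sigma, t > 0$, I would take real parts using $\log s = \log|s| + i\arg s$ together with $\log|s| = \log t + \frac{1}{2}\log(1 + \sigma^2/t^2)$ and the identity $\arg s = \frac{\pi}{2} - \arctan(\sigma/t)$. After collecting the terms that correspond to $\sqrt{2\pi}\,e^{-\pi t/2}t^{\sigma-1/2}$, this yields the key identity
\[
\log\frac{|\Gamma(s)|}{\sqrt{2\pi}\,e^{-\pi t/2}\,t^{\sigma-1/2}} = \frac{\sigma - 1/2}{2}\log\!\left(1 + \frac{\sigma^2}{t^2}\right) + t\arctan(\sigma/t) - \sigma + \mathrm{Re}\,\mu(s),
\]
reducing the problem to squeezing the right-hand side between $\log\xi_1$ and $\log\xi_2$.

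The elementary pieces are handled via standard calculus inequalities. From $\arctan x \leq x$ and $\arctan x \geq x - x^3/3$ on $[0,\infty)$ one obtains $-\sigma^3/(3t^2) \leq t\arctan(\sigma/t) - \sigma \leq 0$, which produces the $-\sigma^3/(3t^2)$ contribution in $\log\xi_1$. From $\log(1+x) \leq x$, combined with sign tracking depending on whether $\sigma \geq 1/2$, one bounds $\frac{\sigma-1/2}{2}\log(1 + \sigma^2/t^2)$ above by $(\sigma-1/2)\sigma^2/(2t^2)$, with the matching lower bound being zero or an even smaller quantity. This supplies the $(\sigma-1/2)\sigma^2/(2t^2)$ term in $\log\xi_2$.

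For $\mathrm{Re}\,\mu(s)$ I would isolate the leading Stirling correction by writing $\mu(s) = 1/(12s) + R(s)$. Then $\mathrm{Re}(1/(12s)) = \sigma/(12|s|^2) \in [0, \sigma/(12t^2)]$, delivering the $\pm\sigma/(12t^2)$ terms. For the remainder, one integration by parts in Binet's integral (using the periodic Bernoulli function $P_2$ satisfying $|P_2|\leq 1/8$) gives a bound of shape $|R(s)|\leq \pi/(24t)$, which provides the $\pm\pi/(24t)$ terms. Combining the contributions in the two directions then yields the stated upper and lower inequalities.

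The main obstacle is securing the clean $\pi/(24t)$ bound on $R(s)$: a naive integration by parts with the above bound on $|P_2|$ only yields $\pi/(16t)$, so a sharper treatment is required, either via a second integration by parts (gaining an extra factor of $|s|^{-1}$ through the periodic $B_3$-type antiderivative) combined with the explicit form of $1/(12s)$, or through a more delicate direct estimate exploiting the structure of the Binet integrand and the constraint $|s|\geq t$. Once that remainder estimate is established, the rest of the argument is routine assembly of the elementary bounds above.
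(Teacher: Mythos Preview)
Your approach mirrors the paper's almost exactly: Stirling's formula, take real parts, split off $\sqrt{2\pi}\,e^{-\pi t/2}t^{\sigma-1/2}$, and bound the three remaining pieces via $x - x^3/3 \leq \arctan x \leq x$, $0 \leq \log(1+x) \leq x$, and an estimate on the Stirling remainder after extracting the $1/(12s)$ term.

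The obstacle you flag, however, is not real. The paper writes the remainder as $-\int_0^\infty \frac{\{x\}-1/2}{x+s}\,dx$ (equivalent to your Binet $\mu(s)$) and performs a single integration by parts to obtain
\[
\int_0^\infty \frac{\{x\}-1/2}{x+s}\,dx \;=\; -\frac{1}{12s} + \frac{1}{2}\int_0^\infty \frac{\{x\}^2 - \{x\} + 1/6}{(x+s)^2}\,dx.
\]
The key arithmetic point is that $\sup_{x}\bigl\lvert\{x\}^2-\{x\}+\tfrac{1}{6}\bigr\rvert = \tfrac{1}{6}$, not $\tfrac{1}{8}$: on $[0,1]$ the polynomial $u^2-u+\tfrac{1}{6}$ ranges from $-\tfrac{1}{12}$ at $u=\tfrac{1}{2}$ to $\tfrac{1}{6}$ at $u=0,1$. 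Hence the second integral is bounded in absolute value by
\[
\frac{1}{12}\int_0^\infty \frac{dx}{(x+\sigma)^2+t^2} \;\leq\; \frac{1}{12}\cdot\frac{\pi}{2t} \;=\; \frac{\pi}{24t},
\]
which is precisely the bound you need. No second integration by parts or more delicate estimate is required; your $\pi/(16t)$ arose from an incorrect sup bound on the periodic $B_2$. With that correction your argument goes through and is the paper's proof.
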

\begin{proof}
The proof follows a similar strategy as in \cite{habsieger}. By Stirling's formula for complex values $s$ such that $-\pi+\delta\leq \arg s\leq \pi-\delta$ given in \textup{\cite[Page 151]{titchmarshfuncbook}} we have 
\begin{align}
    \log\Gamma(s)=\left(s-\frac{1}{2}\right)\log s-s+\frac{\log(2\pi)}{2}-\int_{0}^{\infty}\frac{\{x\}-1/2}{x+s}\, dx.
\end{align}
This gives
\begin{align}
    \log|\Gamma(s)|=\left(\sigma-\frac{1}{2}\right)\log |s|+t\left(-\arctan\left(\frac{t}{\sigma}\right)-\frac{\sigma}{t}\right)+\frac{\log(2\pi)}{2}-\Re\left(\int_{0}^{\infty}\frac{\{x\}-1/2}{x+s}\, dx\right) \label{gamma}.
\end{align}
Using integration by parts we have 
\begin{align*}
    \int_{0}^{\infty}\frac{\{x\}-1/2}{x+s}\, dx &= -\frac{1}{12s}+\frac{1}{2}\int_{0}^{\infty}\frac{\{x\}^2-\{x\}+1/6}{(x+s)^2}
\end{align*}
Hence we have using triangle inequality and $\sigma\geq 0$
\begin{align}
   \left \lvert\Re\left(\int_{0}^{\infty}\frac{\{x\}-1/2}{x+s}\, dx\right)\right\rvert&\leq \frac{\sigma}{12|s|^2}+\frac{1}{12}\int_{0}^{\infty}\frac{dx}{(x+\sigma)^2+t^2}\leq \frac{\sigma}{12t^2}+\frac{\pi}{24t}\label{repart}
\end{align}
Also, since $0\leq \log(1+x)\leq x$ for $x\geq 0$ we also have
\begin{align}
    0\leq \log|s|-\log t=\frac{1}{2}\log\left(1+\frac{\sigma^2}{t^2}\right)\leq \frac{\sigma^2}{2t^2}, \hspace{5mm} \text{for}\hspace{2mm} \sigma\geq 0 \label{logineq}
\end{align}
Moreover note that since $\sigma>0$
\begin{equation}
    -\frac{\pi}{2}-\frac{\sigma^3}{3t^3}\leq-\arctan\left(\frac{t}{\sigma}\right)-\frac{\sigma}{t}=\arctan\left(\frac{\sigma}{t}\right)-\frac{\pi}{2}-\frac{\sigma}{t}\leq -\frac{\pi}{2} \label{arctanineq}
\end{equation}
where the inequalities follows from for $x\geq 0$
\begin{equation*}
    x-\frac{1}{3}x^3\leq\arctan(x)\leq x
\end{equation*}
and the equality in the middle is because
\begin{align*}
\arctan(x)+\arctan(1/x)=\frac{\pi}{2}, \hspace{5mm} \text{for}\hspace{2mm} x>0
\end{align*}
Thus substituting $(\ref{logineq}), (\ref{arctanineq}), (\ref{repart})$ in $(\ref{gamma})$ we obtain
\begin{align}
    \log|\Gamma(s)|&\leq \log\sqrt{2\pi}+\left(\sigma-\frac{1}{2}\right)\log t-\frac{\pi}{2}t+\left(\sigma-\frac{1}{2}\right)\frac{\sigma^2}{2t^2}+\frac{\sigma}{12t^2}+\frac{\pi}{24t}\\
    \log|\Gamma(s)|&\geq \frac{\log(2\pi)}{2}+\left(\sigma-\frac{1}{2}\right)\log t-\frac{\pi}{2}t-\frac{\sigma^3}{3t^2}-\frac{\sigma}{12t^2}-\frac{\pi}{24t}
\end{align}
Therefore exponentiating on both sides we get
\begin{equation*}
    \sqrt{2\pi}e^{-\pi t/2}\xi_1(\sigma, t)t^{\sigma-1/2}\leq\Gamma(s)\leq \sqrt{2\pi}e^{-\pi t_/2}\xi_2(\sigma, t)t^{\sigma-1/2}
\end{equation*}
where 
\begin{align}
\xi_1(\sigma, t)&=\exp\left(-\frac{\sigma}{12t^2}-\frac{\pi}{24t}-\frac{\sigma^3}{3t^2}\right)\\
\xi_2(\sigma, t)&=\exp\left({\left(\sigma-\frac{1}{2}\right)\frac{\sigma^2}{2t^2}+\frac{\sigma}{12t^2}+\frac{\pi}{24t}}\right)
\end{align}
Here we remark that for $\sigma>0, t\geq t_0>0$ 
\begin{equation}
\xi_1(\sigma, t)\geq \xi_1(\sigma, t_0), \hspace{5mm} 
\label{constantb} \xi_2(\sigma, t) \leq
\begin{cases}
 \vspace{5mm}
\displaystyle\frac{\sigma}{12t_0^2}+\frac{\pi}{24t_0} & \text{for}\ 0< \sigma< \frac{1}{2}\\
 \displaystyle\xi_2(\sigma, t_0) &\text{for}\ \sigma\geq \frac{1}{2}
\end{cases}
\end{equation}
\end{proof}
We also remark that $(\ref{gammaexpbound})$ is also valid for $t<0$ provided that $t$ is replace with $|t|$ since $\Gamma(\overline{s})=\overline{\Gamma(s)}$ and $|\Gamma(\overline{s})|=|\overline{\Gamma(s)}|$

\begin{corollary}\label{gengammabound}
We have for $\sigma\leq 0$ and $t>0$
\begin{equation*}
    \xi_3(\sigma, t, n)|\Gamma((\sigma+n)+it)|\leq |\Gamma(\sigma+it)|\leq\frac{1}{t^n}|\Gamma((\sigma+n)+it)|
\end{equation*}
where 
$n$ is the smallest positive integer such that $\Re(\sigma+n)>0$ and
\begin{equation*}
    \xi_3(\sigma, t, n)=\frac{1}{(-\sigma+t)(-\sigma-1+t)\ldots(-\sigma-n+1+t)}.
\end{equation*}
\end{corollary}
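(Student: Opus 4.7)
The plan is to iterate the Gamma functional equation $\Gamma(s+1)=s\Gamma(s)$ exactly $n$ times to write $\Gamma(\sigma+it)$ in terms of $\Gamma((\sigma+n)+it)$. By the definition of $n$, we have $\sigma+n>0$ while $\sigma+k\leq 0$ for $k=0,1,\ldots,n-1$; in particular $\sigma+k+it\neq 0$ since $t>0$, so the iteration is valid and gives
\begin{equation*}
\Gamma((\sigma+n)+it)=\prod_{k=0}^{n-1}(\sigma+k+it)\cdot\Gamma(\sigma+it),
\end{equation*}
whence, after taking absolute values and using $|\sigma+k+it|=\sqrt{(\sigma+k)^2+t^2}$,
\begin{equation*}
|\Gamma(\sigma+it)|=\frac{|\Gamma((\sigma+n)+it)|}{\prod_{k=0}^{n-1}\sqrt{(\sigma+k)^2+t^2}}.
\end{equation*}

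With this identity in hand, the corollary reduces to the elementary double inequality
\begin{equation*}
t^n \;\leq\; \prod_{k=0}^{n-1}\sqrt{(\sigma+k)^2+t^2} \;\leq\; \prod_{k=0}^{n-1}(t-\sigma-k),
\end{equation*}
since the left factor is $1$ and the right factor is $1/\xi_3(\sigma,t,n)$. The left-hand inequality follows factor by factor from $\sqrt{(\sigma+k)^2+t^2}\geq t$, which gives the claimed upper bound $|\Gamma(\sigma+it)|\leq t^{-n}|\Gamma((\sigma+n)+it)|$. For the right-hand inequality I would invoke the sign information: because $\sigma+k\leq 0$ on the index range, $|\sigma+k|=-\sigma-k\geq 0$, and the elementary inequality $\sqrt{a^2+b^2}\leq a+b$ for $a,b\geq 0$ (verified by squaring) yields
\begin{equation*}
\sqrt{(\sigma+k)^2+t^2}=\sqrt{(-\sigma-k)^2+t^2}\leq (-\sigma-k)+t = t-\sigma-k
\end{equation*}
for each $k$; multiplying over $k=0,\ldots,n-1$ delivers the lower bound.

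There is no substantive obstacle in this argument. The only point that requires a moment's care is verifying that the minimality of $n$ really does force $\sigma+k\leq 0$ for every $0\leq k\leq n-1$, so that the sign rearrangement $|\sigma+k|=-\sigma-k$ is legitimate and the inequality $\sqrt{a^2+b^2}\leq a+b$ can be applied with non-negative arguments. Everything else is bookkeeping with the functional equation and term-by-term comparisons of products.
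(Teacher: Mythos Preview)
Your proposal is correct and follows essentially the same approach as the paper: iterate the functional equation $\Gamma(z+1)=z\Gamma(z)$ $n$ times, take absolute values, and bound each factor $\sqrt{(\sigma+k)^2+t^2}$ below by $t$ and above by $(-\sigma-k)+t$. You are in fact slightly more careful than the paper in noting that minimality of $n$ gives $\sigma+k\leq 0$ (rather than strict inequality), which is the correct statement when $\sigma$ is a nonpositive integer.
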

\begin{proof}
The functional equation of gamma function states
\begin{equation}
    \Gamma(z)=\frac{1}{z}\Gamma(z+1), \hspace{8mm} z\in \mathbb{C}\setminus \{0, -1, -2, \ldots\} \label{gammafunceq}
\end{equation}
Now let $n\in \mathbb{Z}_{>0}$ be the smallest integer such that $\Re(z+n)>0$ Thus using $(\ref{gammafunceq})$ we obtain the following relation:
\begin{align*}
    |\Gamma(z)|=\frac{1}{|z(z+1)(z+2)\cdots(z+n-1)|}|\Gamma(z+n)|.
\end{align*}
Hence substituting $z=\sigma+it$ and using Lemma \ref{gammabound} with $\sqrt{(\sigma+l)^2+t^2}\geq t$ for $l \in \{0, 1, \ldots, n-1\}$ we get that 
\begin{align*}
    |\Gamma(\sigma+it)|\leq\frac{1}{t^n}|\Gamma(\sigma+n+it)|
\end{align*}
And since $\sigma+l<0,$ we can find a lower bound using $\sqrt{(\sigma+l)^2+t^2}\leq-(\sigma+l)+t$, for $l\in \{0, 1, \ldots n-1\}$ and thus
\begin{align*}
    |\Gamma(\sigma+it)|&\geq \frac{1}{(-\sigma+t)(-\sigma-1+t)\ldots(-\sigma-n+1+t)}|\Gamma((\sigma+n)+it)|.
\end{align*}
Hence the result follows.
\end{proof}

\begin{corollary}\label{chiexp1}
For $t>0$ we have
\begin{equation}
    |\chi(1+it)|\leq \frac{g(t)}{t^{1/2}}
\end{equation}
where 
\begin{equation}
    g(t)=\sqrt{2\pi}\exp\left(\frac{5}{3t^2}+\frac{\pi}{6t}\right) \label{gfunc}
\end{equation}
\end{corollary}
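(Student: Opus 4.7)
The plan is to start from the symmetric form of the functional equation, namely
\[
\chi(s)=\pi^{s-1/2}\,\frac{\Gamma\!\left(\frac{1-s}{2}\right)}{\Gamma\!\left(\frac{s}{2}\right)},
\]
so that for $s=1+it$ with $t>0$ one has
\[
|\chi(1+it)|=\sqrt{\pi}\,\frac{|\Gamma(-it/2)|}{|\Gamma(1/2+it/2)|}.
\]
Since $\Gamma(\overline z)=\overline{\Gamma(z)}$, the numerator equals $|\Gamma(it/2)|$, so the task reduces to upper-bounding $|\Gamma(it/2)|$ and lower-bounding $|\Gamma(1/2+it/2)|$.

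Next I would apply Corollary \ref{gengammabound} with $\sigma=0$ and $n=1$ (at imaginary part $t/2$) to eliminate the non-positive real part, giving
\[
|\Gamma(it/2)|\leq \frac{2}{t}\,|\Gamma(1+it/2)|.
\]
Now both remaining gamma factors have strictly positive real part, so Lemma \ref{gammabound} applies. I would use the upper half of \eqref{gammaexpbound} with $\sigma=1$ on $\Gamma(1+it/2)$ and the lower half with $\sigma=1/2$ on $\Gamma(1/2+it/2)$. The exponential factors $e^{-\pi t/4}$ cancel, the polynomial factors combine as $(t/2)^{1/2}/(t/2)^{0}$, and the constants $\sqrt{2\pi}$ cancel, leaving
\[
|\chi(1+it)|\leq \sqrt{\pi}\cdot\frac{2}{t}\cdot\frac{(t/2)^{1/2}\,\xi_2(1,t/2)}{\xi_1(1/2,t/2)}=\frac{\sqrt{2\pi}}{t^{1/2}}\cdot\frac{\xi_2(1,t/2)}{\xi_1(1/2,t/2)}.
\]

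Finally I would unwind the definitions of $\xi_1,\xi_2$: at $\sigma=1$ and argument $t/2$ the exponent of $\xi_2$ becomes $1/t^2+1/(3t^2)+\pi/(12t)=4/(3t^2)+\pi/(12t)$, while at $\sigma=1/2$ and argument $t/2$ the exponent of $-\log\xi_1$ becomes $1/(6t^2)+\pi/(12t)+1/(6t^2)=1/(3t^2)+\pi/(12t)$. Adding these exponents yields exactly $5/(3t^2)+\pi/(6t)$, which gives the claimed $g(t)$. The only delicate step is the final bookkeeping of the $\xi$-exponents; the rest is a direct cascade of Lemma \ref{gammabound} and Corollary \ref{gengammabound}, and it is essentially the choice to work with the $\Gamma(\cdot/2)$-version of $\chi$ (rather than the $2^s\pi^{s-1}\sin(\pi s/2)\Gamma(1-s)$ form) that avoids the spurious factor of $2$ that appears if one tries to bound $\cosh(\pi t/2)$ crudely.
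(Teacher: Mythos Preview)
Your proof is correct and follows essentially the same approach as the paper: the paper also uses the $\Gamma(\cdot/2)$-form of $\chi$, applies Corollary~\ref{gengammabound} (with $n=1$) together with the conjugation remark after Lemma~\ref{gammabound} to bound $|\Gamma(-it/2)|$ from above via $|\Gamma(1+it/2)|$, uses the lower bound in Lemma~\ref{gammabound} for $|\Gamma(1/2+it/2)|$, and then computes the same ratio $\xi_2(1,t/2)/\xi_1(1/2,t/2)=\exp\!\big(5/(3t^2)+\pi/(6t)\big)$. The only cosmetic difference is that the paper records the combined bound $|\Gamma(-it/2)|\le \tfrac{2\sqrt{\pi}}{t^{1/2}}e^{-\pi t/4}\xi_2(1,t/2)$ in one line, whereas you separate the Corollary~\ref{gengammabound} step from the Lemma~\ref{gammabound} step.
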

\begin{proof}
We have the following definition for $\chi(1+it)$ as stated in \textup{\cite[Page 16]{titchmarsh}} when $s=1+it$ is substituted
\begin{equation}
    \chi(1+it)=\pi^{1/2+it}\frac{\Gamma\left(\frac{-it}{2}\right)}{\Gamma\left(\frac{1+it}{2}\right)}. \label{reynachi}
\end{equation}

In order to bound $|\chi(1+it)|$ using $(\ref{reynachi})$ we will first bound $\displaystyle\left|\Gamma\left(\frac{-it}{2}\right)\right|$ from above and $\displaystyle\left|\Gamma\left(\frac{1+it}{2}\right)\right|$ from below.
First using Corollary $\ref{gengammabound}$ and then Lemma $\ref{gammabound}$ along with the remark following it we have that 
\begin{equation}
   \Bigg|\Gamma\left(-\frac{it}{2}\right)\Bigg| \leq \frac{2\sqrt{\pi}}{t^{1/2}}e^{-\pi t/4}\xi_2(1, t/2), \hspace{5mm} \Bigg|\Gamma\left(\frac{1+it}{2}\right)\Bigg|\geq \sqrt{2\pi}e^{-\pi t/4}\xi_1(1/2, t/2) \label{expgamma}
\end{equation}
which in turn gives us the result 
\begin{align*}
    |\chi(1+it)|\leq \sqrt{2\pi}\exp\left(\frac{5}{3t^2}+\frac{\pi}{6t}\right)\frac{1}{t^{1/2}}
\end{align*}
where we used $n=1$ and
\begin{align}
    \Bigg|\frac{\xi_2(1, t/2)}{\xi_1(1/2, t/2)}\Bigg|\leq \frac{\exp\left(\displaystyle\frac{4}{3t^2}+\displaystyle\frac{\pi}{12t}\right)}{\exp\left(-\displaystyle\frac{1}{3t^2}-\displaystyle\frac{\pi}{12t}\right)}=\exp\left(\displaystyle\frac{5}{3t^2}+\frac{\pi}{6t}\right), \hspace{5mm} \text{for}\hspace{2mm} t>0
\end{align}
\end{proof}

\begin{theorem} \label{zeta1exp}
For $t>0$ and $n_1=\lfloor\sqrt{t/2\pi}\rfloor$ we have
\begin{equation}
  |\zeta(1+it)|\leq  \left\lvert \sum_{n=1}^{n_1}\frac{1}{n^{1+it}}\right\rvert+\frac{g(t)}{t^{1/2}}\left\lvert \sum_{n=1}^{n_1}\frac{1}{n^{-it}}\right\rvert+\mathcal{R} \label{main1linesum}
\end{equation}
where 
\begin{equation*}
    \mathcal{R}=\mathcal{R}(t)\coloneqq\frac{1}{t^{1/2}}\left(\sqrt{\frac{\pi}{2}}+\frac{g(t)}{2}\right)+\frac{1}{t}\left(9\sqrt{\frac{\pi}{2}}+\frac{g(t)}{\sqrt{\pi(3-2\log 2)}}\right)+\frac{1}{t^{3/2}}\left(\frac{968\pi^{3/2}+g(t)242\pi}{700}\right).
\end{equation*}
and $g(t)$ is  given by $(\ref{gfunc})$.
\end{theorem}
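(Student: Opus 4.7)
The plan is to derive the stated inequality from an explicit approximate functional equation (AFE) for $\zeta(s)$, specialized to $s=1+it$ with the balanced truncation $x = y = \sqrt{t/(2\pi)}$. The AFE takes the form
\[
\zeta(s) = \sum_{n\le x} \frac{1}{n^s} + \chi(s) \sum_{n \le y} \frac{1}{n^{1-s}} + E(s,x,y), \qquad 2\pi xy = t,
\]
where $\chi$ is the factor from $\zeta(s)=\chi(s)\zeta(1-s)$. Since $1-s=-it$ at $s=1+it$, the second sum equals $\sum_{n\le n_1} n^{it}=\sum_{n\le n_1}n^{-(-it)}$, which matches the second sum in the theorem after rounding $\sqrt{t/(2\pi)}$ down to $n_1$. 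Applying the triangle inequality and bounding $|\chi(1+it)|\le g(t)/t^{1/2}$ by Corollary \ref{chiexp1} immediately produces the two main sums with the prescribed $g(t)/t^{1/2}$ weight.

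To carry out the proof I would first establish the AFE at $s=1+it$ by starting from an Euler--Maclaurin expansion of the tail $\sum_{n>n_1} n^{-1-it}$, then transform the resulting integral via the functional equation $\zeta(1+it)=\chi(1+it)\zeta(-it)$ into an expression that is similarly truncated at $n_1$ by another Euler--Maclaurin expansion. The saddle point of the phase $e^{-it\log n}$ sits precisely at $n=\sqrt{t/(2\pi)}$, which is exactly why the choice $n_1=\lfloor\sqrt{t/(2\pi)}\rfloor$ balances the two tails. After collection, each term in $E(1+it, n_1, n_1)$ is either $\chi$-free (contributing to the non-$g(t)$ parts of $\mathcal R$) or carries a factor of $\chi(1+it)$ (contributing the $g(t)$ parts, after Corollary \ref{chiexp1}).

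The main obstacle is extracting the explicit constants in $\mathcal R$ without slack. Grouping by powers of $t$, the $t^{-1/2}$ term $\sqrt{\pi/2}+g(t)/2$ will come from the boundary defect produced by replacing $\sqrt{t/(2\pi)}$ with the integer $n_1$ in each partial sum; the $t^{-1}$ term, including the constant $1/\sqrt{\pi(3-2\log 2)}$, will emerge from the first oscillatory correction to the tail, estimated via a sharpened second-derivative exponential-sum bound on the dyadic block $(n_1, 2n_1]$, with the $3-2\log 2$ arising from integrating a $1/\sqrt{\log u}$-type integrand; and the $t^{-3/2}$ term will come from the next Euler--Maclaurin remainder, bounded using explicit estimates on the second Bernoulli polynomial. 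The strategic structure is entirely dictated by the AFE, so the real difficulty is the bookkeeping required to convert each asymptotic error into an admissible explicit constant while uniformly accounting for the shift from the real parameter $\sqrt{t/(2\pi)}$ to its integer part $n_1$.
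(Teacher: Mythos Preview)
Your high-level architecture is right: the paper does split $\zeta(1+it)$ into two truncated sums via the functional-equation factor $\chi(1+it)$, applies the triangle inequality, and invokes Corollary~\ref{chiexp1} to replace $|\chi(1+it)|$ by $g(t)/t^{1/2}$. Where you diverge is in the analytic input for the remainder. The paper does \emph{not} use the Hardy--Littlewood approximate functional equation with Euler--Maclaurin tails. Instead it uses Siegel's decomposition
\[
\zeta(s)=\mathcal{R}(s)+\chi(s)\,\overline{\mathcal{R}}(1-s),
\]
together with Arias de Reyna's explicit Riemann--Siegel expansion for $\mathcal{R}(s)$ (equation~(\ref{partialsum}) in the paper). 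The coefficients $C_k(p)$ and the truncation remainder $RS_K$ are bounded by quoting Reyna's Theorems~4.1, 4.2 and~6.1 at $K=1$, once at $\sigma=1$ and once at $\sigma=0$. Every numerical constant in $\mathcal{R}(t)$ is lifted verbatim from those theorems: the $1/2$ and $9/2$ come from $|C_0(p)|\le 1/2$ and the bound on $|C_1(p)|$; the $242\pi/700$ is Reyna's bound on $|RS_1|$; and the $1/\sqrt{\pi(3-2\log 2)}$ is the $\sigma=0$ case of Reyna's Theorem~4.1 (it involves $\Gamma(1/2)$ and a constant from Reyna's analysis, not an integral of $1/\sqrt{\log u}$ over a dyadic block as you suggest).

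So your guesses about the provenance of the constants---boundary defects, second-derivative exponential-sum tests on $(n_1,2n_1]$, Bernoulli-polynomial remainders---do not match what actually happens, and a classical AFE/Euler--Maclaurin argument would not reproduce these particular numbers. The Riemann--Siegel expansion is genuinely sharper here: it gives an asymptotic series in powers of $a^{-1}=(t/2\pi)^{-1/2}$ whose coefficients are bounded by small explicit constants, which is precisely what generates the clean $t^{-1/2}$, $t^{-1}$, $t^{-3/2}$ stratification in $\mathcal{R}(t)$. Your plan would at best yield a single $O(t^{-1/2})$ remainder with a worse constant.
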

\begin{proof}
Note that Siegel had obtained the following expression for $\zeta(s):$
\begin{equation}
 \zeta(s)=\mathcal{R}(s)+\chi(s)\overline{\mathcal{R}}(1-s),  \hspace{10mm}\label{maint}
\end{equation}
where $\mathcal{R}(s)$ is defined by an integral as given in \cite{Reyna} and
\begin{equation*}
   \overline{\mathcal{R}}(s)=\overline{\mathcal{R}(\overline{s})}, \hspace{4mm} \chi(s)=\pi^{s-1/2}\frac{\Gamma(\frac{1-s}{2})}{\Gamma(\frac{s}{2})}
\end{equation*}
Reyna showed in \cite{Reyna} that with $\sigma$ and $t$ real and $t>0$, and an integer $K\geq 0$ we have
\begin{align}
    \mathcal{R}(s)=\sum_{n=1}^{N}\frac{1}{n^s}+(-1)^{N-1}Ua^{-\sigma}\left\{\sum_{k=0}^K\frac{C_k(p)}{a^k}+RS_K\right\} \label{partialsum}
\end{align}
where 
\begin{align*}
    a&\coloneqq\sqrt{\frac{t}{2\pi}}, \hspace{5mm}N\coloneqq \lfloor a\rfloor,\hspace{5mm} p\coloneqq 1-2(a-N)\\
    U&\coloneqq \exp\left(-i\left(\frac{t}{2}\log\frac{t}{2\pi}-\frac{t}{2}-\frac{\pi}{8}\right)\right)
\end{align*}
and $RS_K$ and $C_k(p)$ are defined in \textup{\cite[Page 999]{Reyna}}. 

In particular substituting $s=1+it$ in $(\ref{maint})$ we obtain
\begin{equation}
    |\zeta(1+it)|\leq |\mathcal{R}(1+it)|+|\chi(1+it)||\overline{\mathcal{R}}(-it)| \label{1earlybound}
\end{equation}
Thus to bound $|\zeta(1+it)|$ we will first bound $|\mathcal{R}(1+it)|$, $|\overline{\mathcal{R}}(-it)|$ using $(\ref{partialsum})$, triangle inequality and \cite{Reyna} and combine it with the bound $|\chi(1+it)|$ found in Corollary $\ref{chiexp1}$.
Hence, after using $(\ref{partialsum})$ and triangle inequality we get
\begin{align}
   |\mathcal{R}(1+it)|&\leq \left\lvert \sum_{n=1}^{N}\frac{1}{n^{1+it}}\right\rvert+\left(\frac{2\pi}{t}\right)^{1/2}\left(\left\lvert\sum_{k=0}^{K}\frac{C_k(p)}{a^k}\right\rvert+\left\lvert RS_K\right\rvert\right). \label{r}
\end{align}
It remains to bound the last two terms to the right of the inequality above. First, we will bound the second sum on the right hand side of $(\ref{r})$ using $K=1$, triangle inequality after explanding the sum and \textup{\cite[Theorem 4.1]{Reyna}} with $\sigma=1>0,$ and $\Gamma(1/2)=\sqrt{\pi}$. We thus have
\begin{align}
    \left\lvert\sum_{k=0}^1\frac{C_k(p)}{a^k}\right\rvert\leq \left\lvert C_0(p)\right\rvert +\frac{9}{2t^{1/2}}\leq \frac{1}{2}+\frac{9}{2t^{1/2}}. \label{r0}
\end{align}
The last inequality above follows from bounding $|C_0(p)|$ using $(5.2)$ and Theorem 6.1 in \cite{Reyna}.

Next to bound $|RS_K|$ we use \textup{\cite[Theorem 4.2]{Reyna}}, again with $K=1$ and $\sigma =1, t>0$ and $\Gamma(1)=1$. So we obtain
\begin{align}
    |RS_1|\leq\frac{242\pi}{700t}2^{3/2} \label{r1}.
\end{align}
Now plugging in $(\ref{r0})$ and $(\ref{r1})$ in $(\ref{r})$ we get
\begin{align}
    |\mathcal{R}(1+it)|&\leq \left\lvert \sum_{n=1}^{N}\frac{1}{n^{1+it}}\right\rvert+\frac{1}{t^{1/2}}\left(\frac{(2\pi)^{1/2}}{2}+\frac{968\pi^{3/2} }{700t}+\frac{9(2\pi)^{1/2}}{2t^{1/2}}\right) \label{firstterm}
\end{align}
Similarly we can a bound for $|\overline{\mathcal{R}}(-it)|$ by first observing  $|\mathcal{\overline{R}}(-it)|=|\overline{\mathcal{R}(it)}|=|\mathcal{R}(it)|$ and using triangle inequality and $(\ref{partialsum})$ we get
\begin{align}
    |\mathcal{\overline{R}}(-it)|\leq \left\lvert \sum_{n=1}^{N}\frac{1}{n^{it}}\right\rvert+\left(\left\lvert \sum_{k=0}^K\frac{C_k(p)}{a^k}\right\rvert+\left\lvert RS_K\right\rvert\right) \label{initialsum2}
\end{align}

Now to bound the second sum and $|RS_K|$ above we follow similar steps used to derive $(\ref{r0})$ and $(\ref{r1})$ with $\sigma=0$ and $t>0$ and obtain
\begin{align}
    |\mathcal{\overline{R}}(-it)|\leq \left\lvert \sum_{n=1}^{N}\frac{1}{n^{it}}\right\rvert+\frac{1}{2}+\frac{1}{\sqrt{\pi(3-2\log 2)}}\frac{1}{t^{1/2}}+\frac{242\pi}{700t} \label{initialsum2}
\end{align}

Lastly combining $(\ref{maint})$, $(\ref{firstterm})$,  $(\ref{initialsum2})$, and the bound for $|\chi(1+it)|$ using Corollary $\ref{chiexp1}$ we have our theorem.
\end{proof}

\begin{lemma}[Partial Summation] \label{ps}
Let $b_1\geq b_2\geq \ldots \geq b_n\geq 0$, and $s_m=a_1+a_2+\ldots+a_m$ where the $a's$ are any real or complex numbers. Then if $|s_m|\leq M (m=1, 2, \ldots)$,
\begin{equation}
    |a_1b_1+a_2b_2+\ldots a_nb_n|\leq b_1M.
\end{equation}
\end{lemma}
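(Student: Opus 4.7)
The plan is to prove this classical Abel/partial-summation inequality by the standard summation-by-parts trick, then apply the triangle inequality together with the monotonicity hypothesis on the $b_k$'s to obtain a telescoping sum.

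First I would set $s_0 := 0$ so that $a_k = s_k - s_{k-1}$ for every $k \geq 1$. Substituting this into the sum $\sum_{k=1}^{n} a_k b_k$ and reindexing (Abel summation) gives
\begin{equation*}
\sum_{k=1}^{n} a_k b_k \;=\; \sum_{k=1}^{n} (s_k - s_{k-1}) b_k \;=\; \sum_{k=1}^{n-1} s_k (b_k - b_{k+1}) + s_n b_n.
\end{equation*}
This rewriting is the whole point: it trades the original $a_k$'s (which are unstructured) for the partial sums $s_k$ (which are uniformly controlled) multiplied against the consecutive differences $b_k - b_{k+1}$ (which are nonnegative by hypothesis).

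Next I would apply the triangle inequality to the right-hand side. Using the hypothesis $|s_m| \leq M$ for every $m$, together with $b_k - b_{k+1} \geq 0$ and $b_n \geq 0$, I obtain
\begin{equation*}
\Bigl|\sum_{k=1}^{n} a_k b_k\Bigr| \;\leq\; \sum_{k=1}^{n-1} |s_k| (b_k - b_{k+1}) + |s_n| b_n \;\leq\; M\sum_{k=1}^{n-1} (b_k - b_{k+1}) + M b_n.
\end{equation*}
The sum telescopes to $b_1 - b_n$, so the right-hand side collapses to $M(b_1 - b_n) + Mb_n = Mb_1$, which is exactly the claimed bound.

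There is no genuine obstacle here; the result is a standard lemma and the only subtlety is using both $b_k - b_{k+1}\geq 0$ (so the bound $|s_k|\leq M$ can be pulled out without flipping signs) and $b_n\geq 0$ (so the boundary term can be absorbed). Both are guaranteed by the hypothesis $b_1 \geq b_2 \geq \cdots \geq b_n \geq 0$, so the argument goes through cleanly.
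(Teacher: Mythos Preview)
Your proof is correct and is precisely the standard Abel-summation argument. The paper does not actually prove this lemma; it simply cites \cite[Page 96]{titchmarsh}, where the same argument appears.
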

This lemma can be found in \textup{\cite[Page 96]{titchmarsh}}. In this paper we will use it to  remove the $n^{-1}$ weight from the sum $\sum n^{-1 -it}$. In order to do so, we let $b_n=\frac{1}{n}$, $a_n=n^{-it}$ and $M=\displaystyle\max_{1\leq m\leq L}\left|\sum_{k=N+1}^{N+m}n^{-it}\right|$ giving us 
\begin{equation*}
    \left|\sum_{n=N+1}^{N+L}\frac{e^{-it\log n}}{n}\right|\leq \frac{1}{N+1}\max_{1\leq \Delta\leq L}\left|\sum_{N+1}^{N+\Delta}e^{-it\log n}\right|.
\end{equation*}
\section{Proof of Theorem \ref{maintheorem}}
First using Theorem $\ref{zeta1exp}$ we get a significant improvement on known explicit bounds of the form $(\ref{landaugen})$ by estimating  $(\ref{main1linesum})$ trivially using triangle inequality, $(\ref{harmonicsum})$, $(\ref{gfunc})$, and $\lfloor t^{1/2}/\sqrt{2\pi}\rfloor\geq t^{1/2}/\sqrt{2\pi}-1$ to get the following bound valid for $t>2\pi$:
\begin{align} 
    |\zeta(1+it)|&\leq \frac{1}{2}\log t+\gamma+\frac{\sqrt{2\pi}}{t^{1/2}-\sqrt{2\pi}}-\frac{1}{2}\log(2\pi)+\exp\left(\frac{5}{3t^2}+\frac{\pi}{6t}\right)+\mathcal{R} \label{zetatriangle}
\end{align}

To improve upon $(\ref{zetatriangle})$ we make use of Lemma $\ref{fifthderivative}$. With this in mind, we first split the sums on the right of the inequality in $(\ref{main1linesum})$ as follows:
\begin{align}
   |\zeta(1+it)|\leq \left|\sum_{n=1}^{\lfloor{jt^{1/5}\rfloor}-\delta_0}\frac{1}{n^{1+it}}\right|+\left|\sum_{n=\lceil{jt^{1/5}}\rceil}^{\lfloor{\sqrt{\frac{t}{2\pi}}}\rfloor}\frac{1}{n^{1+it}}\right|+\frac{g(t)}{t^{1/2}}\left\lvert \sum_{n=1}^{\lfloor{\sqrt{\frac{t}{2\pi}}}\rfloor}\frac{1}{n^{-it}}\right\rvert+\mathcal{R}. \label{eq1} 
\end{align}
where $ j\in \mathbb{Z}_{> 1}$ to be chosen later 
and 
\[\delta_0:= \begin{cases} 
      1 & jt^{1/5}\in \mathbb{Z} \\
      0 & \text{otherwise}
   \end{cases}
\]
The first sum in $(\ref{eq1})$ in the range $1\leq n\leq \lfloor jt^{1/5}\rfloor-\delta_0$ is computed using triangle inequality and explicit bound on harmonic sum obtained using partial summation and stated in \cite{trudgian}
\begin{equation}
    \sum_{n\leq N} \frac{1}{n}\leq \log N+\gamma+\frac{1}{N}. \label{harmonicsum}
\end{equation}
The second sum in the range $\lceil {jt^{1/5}}\rceil\leq n\leq \lfloor \sqrt{t/(2\pi)}\rfloor$ is divided into dyadic pieces where each of these pieces is estimated using Lemma $\ref{fifthderivative}$. Lastly, the third sum is bounded trivially using triangle inequality.

The dyadic subdivision for this second  sum in $(\ref{eq1})$ is carried out in the following manner and then using triangle inequality subject to $\epsilon>\displaystyle\frac{2}{jt^{1/5}}>0$:
\begin{equation}
   \sum_{n=\lceil{jt^{1/5}}\rceil}^{\lfloor{\sqrt{t/2\pi}\rfloor}}\frac{1}{n^{1+it}}= \sum_{r=0}^{R(\epsilon)-1}\sum_{n=\lceil (1+\epsilon)^rjt^{1/5}\rceil}^{\lfloor (1+\epsilon)^{r+1}jt^{1/5}\rfloor-\delta_r} \frac{1}{n^{1+it}} \label{eq2}
\end{equation}
where,
\[\delta_r:= \begin{cases} 
      1 & (1+\epsilon)^{r+1}jt^{1/5}\in \mathbb{Z} \\
      0 & \text{otherwise}
   \end{cases}
\]
and
\begin{equation}
    R(\epsilon)\leq \left\lfloor\frac{\frac{3}{10}\log t-\log {\sqrt{2\pi}j}}{\log(1+\epsilon)}\right\rfloor+1  \label{nump}
\end{equation}
along with $R(\epsilon)\geq 1.$ Here, $R(\epsilon)$ gives the number of dyadic pieces of our main sum obtained using the parameter $\epsilon$. This bound is obtained using the following inequality and solving for $R(\epsilon)$: 
\begin{equation*}
  (1+\epsilon)^{R(\epsilon)-1}jt^{1/5}  \leq \lceil(1+\epsilon)^{R(\epsilon)-1}jt^{1/5}\rceil \leq \left\lfloor \sqrt{\frac{t}{2\pi}} \right\rfloor \leq \sqrt{\frac{t}{2\pi}}.
\end{equation*}
Also note that that since $\epsilon>\displaystyle\frac{2}{jt^{1/5}}$ we have $\lceil (1+\epsilon)^rjt^{1/5}\rceil< \lfloor (1+\epsilon)^{r+1}jt^{1/5}\rfloor-\delta_r$.

To bound the sum on the right hand side in ($\ref{eq2}$) we first use Lemma \ref{ps} and thus we get
\begin{equation}
S\coloneqq \left|\sum_{r=0}^{R(\epsilon)-1} \sum_{n=\lceil (1+\epsilon)^rjt^{1/5}\rceil}^{\lfloor (1+\epsilon)^{r+1}jt^{1/5}\rfloor-\delta_r} \frac{1}{n^{1+it}}\right|\leq \sum_{r=0}^{R(\epsilon)-1}\frac{1}{\lceil (1+\epsilon)^rjt^{1/5}\rceil}\max_{1\leq \Delta\leq L}\left| \sum_{n=\lceil (1+\epsilon)^rjt^{1/5}\rceil}^{\lceil (1+\epsilon)^rjt^{1/5}\rceil+\Delta-1} n^{-it} \right| \label{eq3}
\end{equation}
where $L\coloneqq L_r=\lfloor (1+\epsilon)^{r+1}jt^{1/5}\rfloor-\delta_r-\lceil (1+\epsilon)^rjt^{1/5}\rceil+1$ where $L> 1$ because $\epsilon>\displaystyle\frac{2}{jt^{1/5}}$.

We will apply Lemma $\ref{fifthderivative}$ to the inner sum in $(\ref{eq3})$. To do so first note,
\begin{equation*}
    f(x):=\frac{-t}{2\pi}\log {x}, \hspace{15mm}\lceil (1+\epsilon)^rjt^{1/5}\rceil\leq x\leq \lceil (1+\epsilon)^rjt^{1/5}\rceil+(\Delta-1),
\end{equation*}
then 
\begin{equation*}
    f^{(4)}(x):=\frac{-12t}{\pi x^5}, \hspace{15mm}\lceil (1+\epsilon)^rjt^{1/5}\rceil\leq x\leq \lceil (1+\epsilon)^rjt^{1/5}\rceil+(\Delta-1)).
\end{equation*}
Now using $1\leq \Delta\leq L$ we have,
\begin{align}
    \frac{12t}{\pi (\lceil (1+\epsilon)^rjt^{1/5}\rceil+\Delta-1)^5}&\leq |f^{(5)}(x)|\leq  \frac{12t}{\pi \lceil (1+\epsilon)^rjt^{1/5}\rceil^5}\\
     \frac{12t}{\pi (\lfloor (1+\epsilon)^{r+1}jt^{1/5}\rfloor)^4}&\leq |f^{(5)}(x)|\leq  \frac{12t}{\pi \lceil (1+\epsilon)^rjt^{1/5}\rceil^5}\\
    \frac{1}{W}&\leq |f^{(5)}(x)|\leq  \lambda\frac{1}{W}
\end{align}

Thus, we set
\begin{equation*}
    W:=W_r=\frac{\pi (\lfloor (1+\epsilon)^{r+1}jt^{1/5}\rfloor)^5}{12t}, \hspace{15mm} \lambda:=\lambda_r=\frac{(\lfloor (1+\epsilon)^{r+1}jt^{1/5}\rfloor)^5}{(\lceil (1+\epsilon)^rjt^{1/5}\rceil)^5}
\end{equation*}

Using $\lceil (1+\epsilon)^{r}jt^{1/5}\rceil\leq \lfloor (1+\epsilon)^{r+1}jt^{1/5}\rfloor-\delta_r\leq\lfloor (1+\epsilon)^{r+1}jt^{1/5}\rfloor$ and $r\geq 0, j>1$ we conclude $W>1$ and $\lambda\geq 1$. Hence we can apply Lemma $\ref{fifthderivative}$ to the right-hand side sum in $(\ref{eq3})$ and get
\begin{align*}
    S&\leq \sum_{r=0}^{R(\epsilon)-1}\frac{1}{\lceil (1+\epsilon)^rjt^{1/5}\rceil}(\alpha_5L^2W^{-1/15}+\tau_5L^{7/4}W^{1/15}+\gamma_5L^{3/2}W^{2/15}+\omega_5L^{5/4}W^{2/15}+\beta_5LW^{3/15}\\
    &\hspace{5mm}+\eta_5\alpha_5L+\eta_5\tau_5L^{3/4}W^{2/15}+\eta_5\gamma_5L^{1/2}W^{3/15}+\eta_5\omega_5L^{1/4}W^{1/5}+\eta_5\beta_5W^{4/15})^{1/2}
\end{align*}
Factoring $L^2W^{-1/15}$ gives us that  
\begin{align}
    S&\leq \sum_{r=0}^{R(\epsilon)-1}\frac{\sqrt{L^2W^{-1/15}}}{\lceil (1+\epsilon)^rjt^{1/5}\rceil} (\alpha_5+\tau_5L^{-1/4}W^{2/15}+\gamma_5L^{-1/2}W^{3/15}+\omega_5L^{-3/4}W^{3/15}+\beta_5L^{-1}W^{4/15}\nonumber\\
    &\hspace{5mm}+\eta_5\alpha_5L^{-1}W^{1/15}+\eta_5\tau_5L^{-5/4}W^{3/15}+\eta_5\gamma_5L^{-3/2}W^{4/15}+\eta_5\omega_5L^{-7/4}W^{4/15}+\eta_5\beta_5L^{-2}W^{5/15})^{1/2} \label{s1}
\end{align}

Next let us bound $L^2W^{-1/15}$ using the definition of $L$ and $W$. With this in mind and using $\lfloor x\rfloor\leq x$, $\lceil x \rceil\geq x$ and $\delta_r\geq 0$ we obtain
\begin{align}
    L&=\lfloor (1+\epsilon)^{r+1}jt^{1/5}\rfloor-\delta_r-\lceil (1+\epsilon)^rjt^{1/5}\rceil+1\\
    &\leq \epsilon(1+\epsilon)^rjt^{1/5}\psi_{\epsilon, r, t} \label{lubound}
\end{align}
where 
\begin{equation}
\psi_{\epsilon, r, t}\coloneqq 1+\frac{1}{\epsilon(1+\epsilon)^rjt^{1/5}} \label{defp}
\end{equation}

With this at hand we now bound $LW^{-1/15}$ from above using $\lfloor (1+\epsilon)^{r+1}jt^{1/5}\rfloor\geq\lfloor (1+\epsilon)^{r+1}jt^{1/5}\rfloor-\delta_r\geq \lceil (1+\epsilon)^rjt^{1/5} \rceil\geq (1+\epsilon)^rjt^{1/5}$ and get
\begin{align} 
    LW^{-1/15}&\leq \epsilon(1+\epsilon)^rjt^{1/5}\psi_{\epsilon, r, t}\left(\frac{\pi (\lfloor (1+\epsilon)^{r+1}jt^{1/5}\rfloor)^5}{12t}\right)^{-1/15}\\
     &\leq \epsilon\psi_{\epsilon, r, t}\left(\frac{12}{\pi}\right)^{1/15}j^{2/3}(1+\epsilon)^{2r/3}t^{1/5} \label{ubl}.
\end{align}

Substituting $(\ref{ubl}) $ and $(\ref{lubound})$ in $(\ref{s1})$ we have 

\begin{align}
    S&\leq \left(\frac{12}{\pi}\right)^{1/30}\epsilon\sum_{r=0}^{R(\epsilon)-1}\frac{\psi_{\epsilon, r, t}}{j^{1/6}(1+\epsilon)^{r/6}}\mathcal{B}_r \label{sumb0}
\end{align}
where 
\begin{align}
    \mathcal{B}_r&=\Bigg(\alpha_5+\tau_5\frac{W^{2/15}}{L^{1/4}}+\gamma_5\frac{W^{3/15}}{L^{1/2}}+\omega_5\frac{W^{3/15}}{L^{3/4}}+\beta_5\frac{W^{4/15}}{L}+\eta_5\alpha_5\frac{W^{1/15}}{L}+\eta_5\tau_5\frac{W^{3/15}}{L^{5/4}}\nonumber\\
    &\hspace{5mm}+\eta_5\gamma_5\frac {W^{4/15}}{L^{3/2}}+\eta_5\omega_5\frac{W^{4/15}}{L^{7/4}}+\eta_5\beta_5\frac{W^{5/15}}{L^2}\Bigg)^{1/2} \label{bb}
\end{align}

Now let us focus on bounding $\mathcal{B}_r$ using upper and lower bounds for $W$ and $L$ respectively. 

We have the following upper bound for $W$:
\begin{align}
    W&= \frac{\pi}{12}\frac{\lfloor(1+\epsilon)^{r+1}jt^{1/5}\rfloor^{5}}{t}\leq \frac{\pi}{12}(1+\epsilon)^5j^5(1+\epsilon)^{5r} \label{ubw}
\end{align}

and using $\lfloor x \rfloor\geq x-1$, $\lceil x \rceil\leq x+1$ and $\delta_r\leq 1$
\begin{align}
    L&=\lfloor (1+\epsilon)^{r+1}jt^{1/5}\rfloor-\delta_r-\lceil (1+\epsilon)^rjt^{1/5}\rceil+1 \nonumber\\
    &\geq(1+\epsilon)^{r}jt^{1/5}\left(\epsilon-\frac{2}{(1+\epsilon)^{r}jt^{1/5}}\right)\nonumber\\
    &\geq \phi_{\epsilon, j, t}(1+\epsilon)^{r}jt^{1/5} \label{lbl}
\end{align}
where $\phi_{\epsilon, j, t}=\displaystyle\left(\epsilon-\frac{2}{jt^{1/5}}\right)$ and recall that $\epsilon>\displaystyle\frac{2}{jt^{1/5}}$.

Now using $(\ref{ubw})$ and $(\ref{lbl})$ we can bound for a quantity in the form
\begin{align}
    \frac{W^{a/15}}{L^b}&\leq \left(\frac{\pi}{12}\right)^{a/15}\frac{(1+\epsilon)^{a/3}j^{a/3-b}}{\phi_{\epsilon, j, t}^{b}}(1+\epsilon)^{(a/3-b)r}\frac{1}{t^{b/5}} \label{genb}
\end{align}
Using $(\ref{genb})$ we can bound the following quantities:
\begin{equation*}
\frac{W^{2/15}}{L^{1/4}}, \frac{W^{1/5}}{L^{1/2}}, \frac{W^{1/5}}{L^{3/4}}, \frac{W^{4/15}}{L}, \frac{W^{1/15}}{L}, \frac{W^{1/5}}{L^{5/4}}, \frac {W^{4/15}}{L^{3/2}}, \frac{W^{4/15}}{L^{7/4}}, \frac{W^{1/3}}{L^2}
\end{equation*}
by substituting $a= 1, 2, 3, 4, 5$ and $b=\frac{1}{4}, \frac{1}{2}, \frac{3}{4}, 1, \frac{5}{4}, \frac{3}{2}, \frac{7}{4}, 2$.

With $(\ref{bb})$, $(\ref{genb})$, the above mentioned $a, b$ values and the inequality $\sqrt{x_1+x_2+\ldots}\leq \sqrt{x_1}+\sqrt{x_2}+\ldots$ for $x_1, x_2, \ldots>0$, we finally obtain the following complicated upper bound for $\mathcal{B}_r$
\begin{align}
    \mathcal{B}_r&\leq \sqrt{\alpha_5}+\left(\tau_5\left(\frac{\pi}{12}\right)^{2/15}\frac{(1+\epsilon)^{2/3}j^{5/12}}{\phi_{\epsilon, j, t}^{1/4}}\frac{(1+\epsilon)^{5r/12}}{t^{1/20}}+\gamma_5\left(\frac{\pi}{12}\right)^{1/5}\frac{(1+\epsilon)j^{1/2}}{\phi_{\epsilon, j, t}^{1/2}}\frac
{(1+\epsilon)^{r/2}}{t^{1/10}}\right. \nonumber\\
&\qquad \left. +\omega_5\left(\frac{\pi}{12}\right)^{1/5}\frac{(1+\epsilon)j^{1/4}}{\phi_{\epsilon, j, t}^{3/4}}\frac{(1+\epsilon)^{r/4}}{t^{3/20}}+\beta_5\left(\frac{\pi}{12}\right)^{4/15}\frac{(1+\epsilon)^{4/3}j^{1/3}}{\phi_{\epsilon, j, t}}\frac{(1+\epsilon)^{r/3}}{t^{1/5}} \right)^{1/2}\nonumber\\
&\qquad +\left(\eta_5\alpha_5\left(\frac{\pi}{12}\right)^{1/15}\frac{(1+\epsilon)^{1/3}}{j^{2/3}\phi_{\epsilon, j, t}}\frac{1}{(1+\epsilon)^{2r/3}t^{1/5}}+\eta_5\tau_5\left(\frac{\pi}{12}\right)^{1/5}\frac{(1+\epsilon)}{j^{1/4}\phi_{\epsilon, j, t}^{5/4}}\frac{1}{(1+\epsilon)^{r/4}t^{1/4}}\right.\nonumber\\
&\qquad \left.+\eta_5\gamma_5\left(\frac{\pi}{12}\right)^{4/15}\frac{(1+\epsilon)^{4/3}}{j^{1/6}\phi_{\epsilon, j, t}^{3/2}}\frac{1}{(1+\epsilon)^{r/6}t^{3/10}} + \eta_5\omega_5\left(\frac{\pi}{12}\right)^{4/15}\frac{(1+\epsilon)^{4/3}}{j^{5/12}\phi_{\epsilon, j, t}^{7/4}}\frac{1}{(1+\epsilon)^{5r/12}t^{7/20}}\right. \nonumber\\
&\qquad \left. \eta_5\beta_5\left(\frac{\pi}{12}\right)^{1/3}\frac{(1+\epsilon)^{5/3}}{\phi_{\epsilon, j, t}^2j^{1/3}}\frac{1}{(1+\epsilon)^{r/3}t^{2/5}}\right)^{1/2} \label{boundonb}
\end{align}

At this stage we can factor $\displaystyle\frac{(1+\epsilon)^{r/2}}{t^{1/20}}$ and $\displaystyle\frac{1}{(1+\epsilon)^{r/6}t^{1/5}}$ from the second and third square-root terms above and plugging the bound obtained for $\mathcal{B}_r$ into inequality $(\ref{sumb0})$ we obtain

\begin{align}
   S &\leq \left(\frac{12}{\pi}\right)^{1/30}\frac{\epsilon}{j^{1/6}}\sum_{r=0}^{R(\epsilon)-1}\psi_{\epsilon, r, t}\Bigg(\frac{\sqrt{\alpha_5}}{(1+\epsilon)^{r/6}}+\mathcal{C}_1\frac{(1+\epsilon)^{r/12}}{t^{1/40}}+\mathcal{C}_2\frac{1}{(1+\epsilon)^{r/4}t^{1/10}}\Bigg) \label{blast}
\end{align}
where 
\begin{align*}
    \mathcal{C}_1\coloneqq\mathcal{C}_1(t)&= \Bigg(\tau_5\left(\frac{\pi}{12}\right)^{2/15}\frac{(1+\epsilon)^{2/3}j^{5/12}}{\phi_{\epsilon, j, t}^{1/4}}+\gamma_5\left(\frac{\pi}{12}\right)^{1/5}\frac{(1+\epsilon)j^{1/2}}{\phi_{\epsilon, j, t}^{1/2}}\frac{1}{t^{1/20}}+\omega_5\left(\frac{\pi}{12}\right)^{1/5}\frac{(1+\epsilon)j^{1/4}}{\phi_{\epsilon, j, t}^{3/4}}\frac{1}{t^{1/10}}\\
    &\hspace{8mm}+\beta_5\left(\frac{\pi}{12}\right)^{4/15}\frac{(1+\epsilon)^{4/3}j^{1/3}}{\phi_{\epsilon, j,t}}\frac{1}{t^{3/20}}\Bigg)^{1/2}\\
    \mathcal{C}_2\coloneqq\mathcal{C}_2(t)&= \Bigg(\eta_5\alpha_5\left(\frac{\pi}{12}\right)^{1/15}\frac{(1+\epsilon)^{1/3}}{j^{2/3}\phi_{\epsilon, j,t}}+\eta_5\tau_5\left(\frac{\pi}{12}\right)^{1/5}\frac{(1+\epsilon)}{j^{1/4}\phi_{\epsilon, j,t}^{5/4}}\frac{1}{t^{1/20}}+\eta_5\gamma_5\left(\frac{\pi}{12}\right)^{4/15}\frac{(1+\epsilon)^{4/3}}{j^{1/6}\phi_{\epsilon, j,t}^{3/2}}\frac{1}{t^{1/10}}\\
    &\hspace{8mm}+\eta_5\omega_5\left(\frac{\pi}{12}\right)^{4/15}\frac{(1+\epsilon)^{4/3}}{j^{5/12}\phi_{\epsilon, j,t}^{7/4}}\frac{1}{t^{3/20}}+\eta_5\beta_5\left(\frac{\pi}{12}\right)^{1/3}\frac{(1+\epsilon)^{5/3}}{\phi_{\epsilon, j,t}^2j^{1/3}} \frac{1}{t^{1/5}}\Bigg)^{1/2}
\end{align*}

Next using the definition of $\psi_{\epsilon, r, t}$ from $(\ref{defp})$ we can write $(\ref{blast})$ as
\begin{align}
S&\leq\left(\frac{12}{\pi}\right)^{1/30}\frac{\epsilon}{j^{1/6}}\sum_{r=0}^{R(\epsilon)-1}\Bigg(\frac{\sqrt{\alpha_5}}{(1+\epsilon)^{r/6}}+\frac{\mathcal{C}_1(1+\epsilon)^{r/12}}{t^{1/40}}+\frac{\mathcal{C}_2}{(1+\epsilon)^{r/4}t^{1/10}}\Bigg)\nonumber\\
&\hspace{5mm}+\left(\frac{12}{\pi}\right)^{1/30}\frac{1}{j^{7/6}}\sum_{r=0}^{R(\epsilon)-1}\Bigg(\frac{\sqrt{\alpha_5}}{(1+\epsilon)^{7r/6}t^{1/5}}+\frac{\mathcal{C}_1}{(1+\epsilon)^{11r/12}t^{9/40}}+\frac{\mathcal{C}_2}{(1+\epsilon)^{5r/4}t^{3/10}}\Bigg) \label{almostf}
\end{align}

Let us estimate each of the above sums using the following inequality that is valid for $c, d>0$ and using $(\ref{nump}):$
\begin{align}
    \sum_{r=0}^{R(\epsilon)-1}\frac{1}{(1+\epsilon)^{cr}}&\leq \frac{(1+\epsilon)^{c}}{(1+\epsilon)^{c}-1}, \hspace{2mm} \sum_{r=0}^{R(\epsilon)-1}(1+\epsilon)^{dr}\leq \frac{(1+\epsilon)^{d}}{(2\pi)^{d/2}j^{d}((1+\epsilon)^{d}-1)}t^{3d/10}\label{sumf}
\end{align}
Finally we can obtain a bound for $S$ using $(\ref{almostf})$ and $(\ref{sumf}):$
\begin{align*}
    S&\leq d_1\Bigg(\sqrt{\alpha_5}\frac{(1+\epsilon)^{1/6}}{(1+\epsilon)^{1/6}-1}+\frac{\mathcal{C}_1(1+\epsilon)^{1/12}}{(2\pi)^{1/24}j^{1/12}((1+\epsilon)^{1/12}-1)}+\frac{\mathcal{C}_2(1+\epsilon)^{1/4}}{(1+\epsilon)^{1/4}-1}t^{-1/10}\Bigg)\\
    &+d_2\Bigg(\frac{\sqrt{\alpha_5}(1+\epsilon)^{7/6}}{(1+\epsilon)^{7/6}-1}t^{-1/5}+\frac{\mathcal{C}_1(1+\epsilon)^{11/12}}{(1+\epsilon)^{11/12}-1}t^{-9/40}+\frac{\mathcal{C}_2(1+\epsilon)^{5/4}}{(1+\epsilon)^{5/4}-1}t^{-3/10}\Bigg).
\end{align*}

where
\begin{align*}
    d_1\coloneqq \left(\frac{12}{\pi}\right)^{1/30}\frac{\epsilon}{j^{1/6}}, \hspace{8mm} d_2\coloneqq \left(\frac{12}{\pi}\right)^{1/30}\frac{1}{j^{7/6}}.
\end{align*}
In the end we have 
\begin{align}
    \left|\sum_{n=\lceil{jt^{1/5}}\rceil}^{\lfloor{\sqrt{\frac{t}{2\pi}}}\rfloor}\frac{1}{n^{1+it}}\right|\leq A_1t^{-3/10}+A_2t^{-9/40}+A_3t^{-1/5}+A_4t^{-1/10}+A_5 \label{f1}
\end{align}
where 
\begin{align*}
    A_1&\coloneqq\frac{d_2\mathcal{C}_2(1+\epsilon)^{5/4}}{(1+\epsilon)^{5/4}-1}, \hspace{3mm} A_2\coloneqq\frac{d_2\mathcal{C}_1(1+\epsilon)^{11/12}}{(1+\epsilon)^{11/12}-1}, \hspace{3mm} A_3\coloneqq \frac{d_2\sqrt{\alpha_5}(1+\epsilon)^{7/6}}{(1+\epsilon)^{7/6}-1}, \hspace{3mm} A_4\coloneqq \frac{d_1\mathcal{C}_2(1+\epsilon)^{1/4}}{(1+\epsilon)^{1/4}-1},\\
     A_5&\coloneqq \frac{d_1\sqrt{\alpha_5}(1+\epsilon)^{1/6}}{(1+\epsilon)^{1/6}-1}+\frac{d_1\mathcal{C}_1(1+\epsilon)^{1/12}}{(2\pi)^{1/24}j^{1/12}((1+\epsilon)^{1/12}-1)}.
\end{align*}

At this stage we bound the third sum in (\ref{eq1}) trivially and thus we get the following bound valid for $t>(2/j)^5$
\begin{align}
     \lvert \zeta(1+it)\rvert \leq \gamma+\log j+\frac{1}{5}\log t+\frac{1}{jt^{1/5}-2} &+A_1t^{-3/10}+A_2t^{-9/40}+A_3t^{-1/5}\nonumber\\
     &+A_4t^{-1/10}+A_5+\frac{g(t)}{\sqrt{2\pi}}+\mathcal{R} \label{secondsumtrivial}
\end{align}
On the other hand, we also have the option of bounding the last sum in $(\ref{eq1})$ using Lemma $\ref{fifthderivative}$ and obtaining an estimate for it using the fact that $\displaystyle\left\lvert \sum_{n=1}^{N}\frac{1}{n^{-it}}\right\rvert=\left\lvert \sum_{n=1}^{N}\frac{1}{n^{it}}\right\rvert$ and following a similar interval split and then steps taken to estimate $S$ as in $(\ref{s1})$. However, the improvements obtained in this case is negligible unless $t$ is astronomically large. Hence we omit the details of such a computation here.  

Finally, we choose the following values for our parameters via numerical experimentation:
\begin{align*}
    \epsilon=0.32, \hspace{2mm} j=60,\hspace{2mm} \eta_3= \left(\frac{15\sqrt{\pi}}{32}\right)^{2/3}, \hspace{2mm}\eta_4=\left(\frac{91}{72\sqrt{23}}\right)^{6/7}, \hspace{2mm}\eta_5=2.2, \hspace{2mm}t_0=8\times10^{60}
\end{align*}
Additionally, we use that
 \begin{equation}
     W\geq \frac{\pi j^5}{12}, \hspace{2mm} \lambda\leq (1+\epsilon)^5, \hspace{2mm} \mathcal{C}_1\leq \mathcal{C}_1(t_0), \hspace{2mm} \mathcal{C}_2\leq \mathcal{C}_2(t_0), \hspace{2mm} g(t)\leq g(t_0), \hspace{2mm} \mathcal{R}\leq\mathcal{R}(t_0)
 \end{equation}
and plugging them into $(\ref{secondsumtrivial})$ we get 
\begin{equation}
    |\zeta(1+it)|\leq 43.9259 + \frac{1}{60t^{1/5}-2}+\frac{0.035264}{t^{3/10}}+ \frac{0.255693}{t^{9/40}}+\frac{0.0552644}{t^{1/5}}+ \frac{2.96078}{t^{1/10}}+ \frac{1}{5}\log t \label{constzeta}
\end{equation}
Now from $(\ref{constzeta})$ we deduce that 
\begin{equation}
    |\zeta(1+it)|-\frac{1}{5}\log t \leq F(t).    
\end{equation}

where 
\begin{equation*}
    F(t)\coloneqq 43.9259 + \frac{1}{60t^{1/5}-2}+\frac{0.035264}{t^{3/10}}+ \frac{0.255693}{t^{9/40}}+\frac{0.0552644}{t^{1/5}}+ \frac{2.96078}{t^{1/10}}
\end{equation*}
Note that $F(t)$ is decreasing in $t$ and thus for $t\geq t_0$ and  we have that 
\begin{equation*}
    F(t)\leq F(t_0)\leq 44.02
\end{equation*}
giving us 
\begin{equation}
    |\zeta(1+it)|\leq \frac{1}{5}\log t+44.02 \label{mine}
\end{equation}
Now for $t\geq 47$, we use $(\ref{zetatriangle})$ and get that 
\begin{equation}
    |\zeta(1+it)|\leq \frac{1}{2}\log t+1.93. \label{consttriv}
\end{equation}
Lastly for $t\geq 3$ we also have
\begin{equation}
    |\zeta(1+it)|\leq \log t. \label{back}
\end{equation}

Ultimately combining $(\ref{mine}), (\ref{consttriv})$ and $(\ref{back})$ for $t\geq 3$ we obtain the the following bound for the zeta function on $1$-line:
\begin{equation}
    |\zeta(1+it)|\leq \min\left(\log t, \frac{1}{2}\log t+1.93, \frac{1}{5}\log t+44.02\right)
\end{equation}

\section{Concluding remarks}
Note that choosing a different set of parameters, one can obtain a slight improvement on the constant $44.02$ in $(\ref{actualzeta1const})$. However it seems that this constant cannot be improved beyond the Euler constant $\gamma=0.57721\ldots$ as the harmonic sums $(\ref{harmonicsum})$ are bounded above and below by at least $\log N+\gamma$. Moreover the leading constant $\frac{1}{5}$ in $(\ref{eventualconst})$ is the best that can be obtained if one insists on using Lemma $\ref{fifthderivative}$ (which is a fifth derivative test) and inequality $(\ref{harmonicsum})$ for harmonic sum, as done here. However, this $1/5$ could be improved if one uses higher explicit derivative test or if one finds a suitable method to take advantage of possible cancellations while estimating  the first sum on the r.h.s in $(\ref{eq1})$ instead of using triangle inequality followed by  $(\ref{harmonicsum})$. Estimates on $|\zeta(1+it)|$ using higher explicit derivative tests are in preparation by the author. 
\bibliographystyle{amsplain}
\bibliography{mainbound}
\end{document}